\newtheorem{lem}{Lemma}[section]%
\newtheorem{theorem}[lem]{Theorem}%
\newtheorem{prop}[lem]{Proposition}%
\newtheorem{rem}[lem]{Remark}%
\newtheorem{notation}[lem]{Notation}
  \def\G{\Gamma}
\def\nd{\mathrel{\bigm|\kern-.7em/}}
\def\f{\noindent}
\def\P\GammaL{\hbox{\rm P\Gamma L}}
\def\Aut{\hbox{\rm Aut}}
\def\Cay{\hbox{\rm Cay}}
\def\BiCay{\hbox{\rm BiCay}}
\def\mz{{\mathbb Z}}
\begin{document}
\title[]{On oriented $m$-semiregular representations of finite groups}

\author{Jia-Li Du}
\address{Jia-Li Du, School of Mathematics, China University of Mining and Technology, Xuzhou 221116, China}
\email{dujl@cumt.edu.cn}

\author{Yan-Quan Feng}
\address{Yan-Quan Feng, School of mathematics and statistics, Beijing Jiaotong University, Beijing 100044, China}
\email{yqfeng@bjtu.edu.cn}

\author{Sejeong Bang}
\address{Sejeong Bang, Department of Mathematics, Yeungnam University, Gyeongsan 38541, South Korea}
\email{sjbang@ynu.ac.kr}

\date{}
 \maketitle

\begin{abstract}

A finite group $G$ admits an {\em oriented regular representation} if there exists a Cayley digraph of $G$ such that it has no digons and its automorphism group is isomorphic to $G$. Let $m$ be a positive integer. In this paper, we extend the notion of oriented regular representations to oriented $m$-semiregular representations using $m$-Cayley digraphs. Given a finite group $G$, an {\em $m$-Cayley digraph} of $G$ is a digraph that has a group of automorphisms isomorphic to $G$ acting semiregularly on the vertex set with $m$ orbits. We say that a finite group $G$ admits an {\em oriented $m$-semiregular representation} if there exists a regular $m$-Cayley digraph of $G$ such that it has no digons and $G$ is isomorphic to its automorphism group. In this paper, we classify finite groups admitting an oriented $m$-semiregular representation for each positive integer $m$.\\

\f {\bf Keywords:} Regular group, semiregular group, regular representation, $m$-Cayley digraph, ORR, O$m$SR.
\medskip

\f {\bf 2010 Mathematics Subject Classification:} 05C25, 20B25.
\end{abstract}

\section{Introduction}
All groups and digraphs considered in this paper are finite. For a digraph $\G$, we denote by $V(\G)$ and $A(\G)$ the set of vertices and the set of arcs respectively, where $V(\G)\ne \emptyset$ and $A(\G)\subseteq V(\G) \times V(\G)$. A digraph $\Gamma $ is called a {\em graph} if the binary relation $A(\G)$ is symmetric, that is, $A(\G)=\{(y,x)\ |\ (x,y)\in A(\G) \}$. A digraph is called {\em regular} if each vertex has the same in- and out-valency. Let $\G$ be a digraph. An automorphism of $\Gamma$ is a permutation $\sigma$ of $V(\G)$ fixing $A(\G)$ setwise, that is, $(x^\sigma , y^\sigma ) \in A(\G)$ if and only if $(x, y) \in A(\G)$. The full automorphism group of $\G$ is denoted by $\Aut(\G)$.\\

\indent Let $G$ be a group with a subset $R\subseteq G\setminus \{1\}$. A {\em Cayley digraph} $\G=\Cay(G,R)$ is the digraph with $V(\Gamma)=G$ and
$A(\G)=\{(g,rg) ~|\ g\in G,r\in R\}$. In particular, $\G$ is a Cayley graph if
and only if $R=R^{-1}$. The right regular representation of $G$ gives rise to an
embedding of $G$ into $\Aut(\Gamma)$, and thus we identify $G$ with its image under this
permutation representation. We say that a group $G$ admits a {\em (di)graphical
regular representation} (GRR or DRR for short) if there exists a Cayley (di)graph
$\G$ of $G$ satisfying $G \cong \Aut(\G)$. Babai~\cite{Babai} proved that except $Q_8$, $\mz_2^2$, $\mz_2^3$, $\mz_2^4$ and $\mz_3^2$, each finite group admits a DRR. It is clear that if a group $G$ admits a GRR then $G$ admits a DRR. However, the converse is not true. Many researchers proposed and investigated various kinds of generalizations of the classifications of DRRs and GRRs. Despite the classification of groups admitting a DRR by Babai~\cite{Babai}, the classification of groups admitting a GRR needs considerably more work. For more results along this way, we refer the reader to~\cite{Godsil,Imrich,ImrichWatkins,NowitzWatkins1,NowitzWatkins2}. A \emph{tournament} is a digraph $\G$ such that for any two distinct vertices $x, y \in  V(\G)$, exactly one of $(x, y)$ and $(y, x)$ is in $A(\G)$. Note that Cayley digraph $\Cay(G, R)$ is a tournament if and only if $R\cap R^{-1} =\emptyset$ and $R\cup R^{-1}=G \setminus\{1\}$, where $1$ is the identity of $G$. It is known that if $\Cay(G, R)$ is a tournament then the order of $G$ is odd. We say that a group $G$ admits a {\em tournament regular representation} (TRR for short) if there exists a Cayley digraph $\G$ of $G$ such that $\G$ is a tournament satisfying $G\cong \Aut(\G)$. Babai and Imrich~\cite{BabaiI} proved that
except $\mz_3^2$, each group of odd order admits a TRR. An \emph{oriented Cayley digraph} is a Cayley digraph $\Cay(G, R)$ satisfying $R \cap R^{-1} =\emptyset$ (i.e., it has no digons.). We say that a group $G$ admits an {\em oriented regular representation} (ORR for short) if there exists a Cayley digraph $\G=\Cay(G,R)$ of $G$ such that $R\cap R^{-1}=\emptyset$ and $G\cong \Aut(\G)$ both hold. It is clear that if a group $G$ admits a TRR then it admits an ORR. In 2017, Morris and Spiga~\cite{MorrisSpiga1} proved that each finite non-solvable group admits an ORR. Spiga~\cite{Spiga} proved that generalized dihedral groups do not admit ORRs. Morris and Spiga~\cite{MorrisSpiga3} completely classified the finite groups admitting ORRs. For more results, we refer the reader to \cite{Hujdurovic,KMMS,Leemann,MorrisSpiga4,Spiga2,VerretXia,Xia}.\\

\indent Let $G$ be a permutation group on a set $\Omega$ and let $\omega \in \Omega$. Denote by $G_\omega$ the stabilizer of $\omega$ in $G$ (i.e., the subgroup
of $G$ fixing $\omega$.). We say that $G$ is {\em semiregular} on $\Omega$ if $G_\omega = 1$ for every $\omega \in \Omega$, and {\em regular} if it is semiregular and transitive. For a positive integer $m$, an {\em $m$-Cayley (di)graph} of a group $G$ is defined as a (di)graph which has a semiregular group of automorphisms
isomorphic to $G$ with $m$ orbits on its vertex set. Note that $1$-Cayley (di)graphs are the usual Cayley (di)graphs. For $m=2$, $2$-Cayley (di)graphs
are called {\em bi-Cayley (di)graphs} (For more concrete definition, see Section 2.). We say that a group $G$ admits a {\em (di)graphical $m$-semiregular representation} (G$m$SR or D$m$SR for short) if there exists a regular $m$-Cayley (di)graph $\Gamma$ of $G$ satisfying $G\cong \Aut(\Gamma)$. In particular, a G$1$SR (D$1$SR, respectively) is the usual GRR (DRR, respectively). In~\cite{DFS}, Du, Feng and Spiga classified finite groups admitting a G$m$SR or a D$m$SR for each positive integer $m$.\\

\indent Now we consider $n$-partite digraphs. We say that a group $G$ admits an {\em $n$-partite digraphical representation} ($n$-$\rm PDR$ for short) if there exists a regular $n$-partite digraph $\Gamma$ such that $G\cong \mathrm{Aut}(\Gamma)$, $\mathrm{Aut}(\Gamma)$ acts semiregularly on $V(\G)$ and the orbits of $\mathrm{Aut}(\Gamma)$ on $V(\G)$ form a partition into $n$ parts giving the structure of $\Gamma$. It is known that each $n$-PDR is also a D$n$SR, but the converse is not true. Recently, Du, Feng and Spiga~\cite{DFS2,DFS3} classified the finite groups admitting an $n$-PDR for each positive integer $n$. \\

\indent In this paper, we extend the notion of oriented regular representations to oriented $m$-semiregular representations using $m$-Cayley digraphs. We say that for a positive integer $m$, a group $G$ admits an {\em oriented $m$-semigular representation} (O$m$SR for short) if there exists a regular oriented $m$-Cayley digraph $\G$ of $G$ satisfying $G\cong \Aut(\G)$. In particular, an O$1$SR is the usual ORR. Finite groups admitting ORRs are completely classified (see Proposition \ref{prop=ORR},  \cite{MorrisSpiga3}).\\

\indent Now we introduce the main result of this paper. In Theorem \ref{theo=main}, we classify the finite groups admitting an oriented $m$-semiregular representation $(m\geq 1)$ as follows.

\begin{theorem}\label{theo=main}
Let $m$ be a positive integer and let $G$ be a finite group. Then $G$ satisfies one of the following:
\begin{enumerate}
\item $G$ admits an O$m$SR;
\item $m=1$, and $G$ is isomorphic to either a generalized dihedral group of order greater than $2$ or one of the $11$ exceptional groups listed in {\sc Table}~$\ref{table1}$;
\item $m=2$, and $G$ is isomorphic to either $\mz_1$, $\mz_2$, $\mz_2^2$,
$\mz_2^3$ or $\mz_2^4$;
\item $3\leq m\leq 6$, and $G$ is isomorphic to $\mz_1$.
\end{enumerate}
\end{theorem}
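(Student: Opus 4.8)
\medskip
\noindent The plan is to dispose of $m=1$ first and then, for $m\ge 2$, to prove that \emph{every} finite group admits an O$m$SR apart from the groups listed in items~(3) and~(4). For $m=1$ an O$1$SR is precisely an ORR, so the statement is exactly the Morris--Spiga classification recorded in Proposition~\ref{prop=ORR}; assume $m\ge 2$ henceforth. Since every O$m$SR is in particular a D$m$SR, any group failing to admit an O$m$SR is either among the groups failing to admit a D$m$SR (classified in~\cite{DFS}) or is a D$m$SR-group for which no \emph{oriented} representation exists, so the core problem is to upgrade digraphical representations to oriented ones and to pin down the small groups where this is impossible. Throughout it is convenient to use coordinates: an $m$-Cayley digraph of $G$ has vertex set $G\times\{1,\dots,m\}$ with $G$ acting by right multiplication on the first coordinate, and it is prescribed by connection sets $S_{ij}\subseteq G$ ($1\le i,j\le m$, $1\notin S_{ii}$) via ``$(x,i)\to(y,j)$ is an arc $\iff yx^{-1}\in S_{ij}$''. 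Such a digraph is oriented exactly when $S_{ii}\cap S_{ii}^{-1}=\emptyset$ for all $i$ and $S_{ij}\cap S_{ji}^{-1}=\emptyset$ for all $i\neq j$, and regular exactly when $\sum_j|S_{ij}|=\sum_j|S_{ji}|$ is independent of $i$; one must choose the $S_{ij}$ so that both hold and $\Aut(\G)=G$.

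The engine of the positive results is a two-layer rigidity argument. First, one records the block sizes $|S_{ij}|$ --- and, where needed, finer isomorphism data of the individual blocks --- as colours on an auxiliary digraph on $m$ ``super-vertices'', one per orbit, and arranges this coloured quotient to be asymmetric; this forces every automorphism of $\G$ to stabilise each orbit $V_i=G\times\{i\}$ setwise. Second, with the orbits stabilised, an automorphism restricts on each $V_i\cong G$ to a permutation $\alpha_i$ compatible with the within-orbit Cayley digraph $\Cay(G,S_{ii})$ and with every bipartite block $S_{ij}$; the decisive point for $m\ge 2$ is that the bipartite blocks may be taken essentially arbitrary (emptying the reverse block $S_{ji}$ restores orientedness between $V_i$ and $V_j$ at no cost but a harmless rebalancing of valencies), so one has great freedom to impose constraints linking the various $\alpha_i$. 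Choosing the $S_{ii}$ (when $G$ admits non-trivial oriented Cayley digraphs, i.e.\ when $G$ is not an elementary abelian $2$-group) and the $S_{ij}$ so that their joint stabiliser is exactly the right regular representation then forces $\alpha_i$ to be right multiplication by a common element of $G$. This is why the ORR-type obstruction evaporates once $|G|$ and $m$ are large enough to host sufficiently asymmetric block patterns, and orientedness --- being cheap, as just noted --- never bites.

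Two families require individual treatment and concentrate the difficulty. The first is the generalized dihedral groups $\Dih(A)$: they admit no ORR, so their oriented Cayley digraphs are never rigid and the within-orbit layer alone cannot pin down $\alpha_i$; one must let the bipartite blocks do the rigidifying and then prove --- this is the technically heaviest step --- that the resulting oriented $m$-Cayley digraph has no unexpected automorphism, controlling the overgroup of $R(\Dih(A))$ in each within-orbit digraph, using that every reflection of $\Dih(A)$ is an involution, and invoking as black boxes known normality/rigidity facts for bi-Cayley digraphs; this has to be run precisely enough to see both that it works for all $\Dih(A)$ when $m\ge 3$ and that for $m=2$ it works exactly when $\Dih(A)\notin\{\mz_2,\mz_2^2,\mz_2^3,\mz_2^4\}$. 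The second family is the small exceptions, handled directly. For $G=\mz_2^k$ with $m=2$, every element is an involution, so orientedness forces $S_{11}=S_{22}=\emptyset$ and $S_{12}\cap S_{21}=\emptyset$; the digraph is then bipartite on $2^{k+1}$ vertices, and enumerating the finitely many possibilities up to the obvious equivalences shows that no rigid one exists for $k\le 4$, while for $k\ge 5$ (and for every other group at $m=2$, for instance $\mz_3$) one does. For $G=\mz_1$, an $m$-Cayley digraph is simply a digraph on $m$ vertices, and a regular oriented digraph on $m$ vertices has valency at most $(m-1)/2$; for $2\le m\le 6$ the only candidates are the edgeless digraph, a disjoint union of directed cycles of length at least $3$ covering all $m$ vertices, and (for $m=5,6$) a few $2$-regular digraphs, all of which are readily checked to be non-rigid, whereas for $m=1$ the one-vertex digraph works and for every $m\ge 7$ one exhibits an explicit rigid regular oriented digraph on $m$ vertices --- for instance the directed $m$-cycle padded with the arcs $i\to i+2$ and then perturbed by one or two degree-preserving $2$-switches. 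Finally, the $11$ groups of Table~\ref{table1} are relevant only at $m=1$ (for $m\ge 2$ the constructions above, together with a few finite computations for the smallest orders, cover them), and assembling all of this yields the four alternatives of the theorem.
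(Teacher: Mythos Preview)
Your two-layer rigidity plan (make the ``quotient'' on orbits asymmetric, then use bipartite blocks to pin down the action within orbits) is exactly the mechanism the paper uses, so at the level of strategy you are aligned with it. But your proposal stays at the level of strategy where the paper supplies concrete constructions, and in two places you miss the ideas that make those constructions tractable.

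First, the paper's main economy is Lemma~\ref{lem=ORR}: if $G$ already has an ORR with connection set $R$, one builds an O$m$SR by putting $R$ (or $R^{-1}$) on each diagonal block and tiny sets such as $\{1\},\{a\},\{a^{-1}\}$ on the off-diagonals, then \emph{proves} orbit-rigidity by explicitly counting arcs in $[\Gamma_m^+(1_i)]$ for each $i$. This single lemma disposes of every group with an ORR for all $m\ge 2$; you do not isolate this reduction and instead describe a uniform scheme that still has to be checked group by group.

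Second, for non-abelian generalised dihedral $G=\langle H,b\rangle$ you correctly identify the difficulty but offer no construction, only ``black boxes'' and the remark that the step is heavy. The paper's device here is not to rigidify $G$ directly but to put an ORR of the \emph{abelian} subgroup $H$ on each diagonal block; each $G_i$ then splits as two copies of this ORR (on $H_i$ and $(Hb)_i$), which are individually rigid, and a single $b$ in one off-diagonal block links the cosets. This is what makes the case manageable; your outline does not reach this idea, and ``controlling the overgroup of $R(\Dih(A))$'' without it is genuinely hard.

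Finally, several of your endgame assertions are non-trivial and are not, as written, proofs. The non-existence of an O$2$SR for $\mz_2^3$ and $\mz_2^4$ and of an O$m$SR for $\mz_1$ at $m=5,6$ is established in the paper by computer search, not by a short argument; your enumeration sketch would have to be carried out. Conversely, your proposed O$m$SR for $\mz_1$ at $m\ge 7$ (``the directed $m$-cycle padded with $i\to i+2$ and perturbed by a couple of $2$-switches'') is not obviously asymmetric---the paper needs a rather delicate family built from three long cycles and a careful case analysis (see the two digraphs in Figure~\ref{Fig2}). Likewise, the existence of an O$2$SR for $\mz_2^n$ with $n\ge 5$ is not a small check: the paper constructs a specific bipartite digraph and reduces its rigidity to the fact that a certain $\Cay(\mz_2^n,T)$ is a GRR. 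The 11 exceptional groups at $m\ge 2$ are handled in the paper by yet another explicit family together with {\sc Magma} verifications for $m=2,3$; ``the constructions above'' in your proposal do not actually cover them, since those groups admit no ORR and are not generalised dihedral.
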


This paper is organized as follows. In Section 2, we set up notations and review known results which will be used in this paper. In Section 3, we prove Theorem \ref{theo=main}. We first show in Lemma \ref{lem=ORR} that any finite group $G$ of order at least $3$ admitting an ORR  must admit an O$m$SR for any integer $m\geq 2$. To consider finite groups which do not admit ORRs, we mainly consider elementary abelian $2$-groups and non-abelian generalized dihedral groups. First, we consider when elementary abelian $2$-groups admit an O$m$SR for positive integer $m$. To show this, we divide into two cases: groups of order at most $16$ (Lemma~\ref{lem=elementary abelian 2-group order le 16}); groups of order greater than $16$  (Lemma~\ref{lem=elementary abelian 2-group}). On the other hand, we prove in Lemma \ref{lem=generalizeddihedral} that finite non-abelian generalized dihedral groups always admit an O$m$SR for any integer $m\geq 2$. Using these results, we classify the finite groups admitting an oriented $m$-semiregular representation for each positive integer $m$ (Theorem \ref{theo=main}).

\section{ORRs, $m$-Cayley digraphs and notations}
In this section, we set up notations and review several results which will be used in this paper.\\

As usual, for a positive integer $n$ and a prime $p$, we denote by $\mz_n$,  $\mz_p^n$,  $Q_8$   and $D_n$ the cyclic group of order $n$, the elementary abelian group of order $p^n$, the quoternion group of order $8$ and the dihedral group of order $2n$, respectively. For an abelian group $H$, the {\em generalized dihedral group} over $H$ is the semidirect product of $H$ with the cyclic group of order two, with the non-identity element acting as the inverse map on $H$. In the following remark, we note some properties of generalized dihedral groups which will be used later.

\begin{rem}\label{rmkgdg}
Let $G$ be the generalized dihedral group over an abelian group $H$. Then we have the following.
\begin{enumerate}
\item The exponent of $G$ is the least common multiple of $2$ and the exponent of $H$.
\item $G$ is abelian if and only if $G$ is an elementary abelian $2$-group.
\end{enumerate}
\end{rem}

To state finite groups admitting ORRs, we first give the following table which shows $11$ exceptional groups admitting no ORRs (see \cite{MorrisSpiga3}).

\begin{table}[!ht]
\begin{tabular}{|c|c|}\hline
\rule{0pt}{12pt}{Group}&\rule{0pt}{12pt}{Comments}\\\hline

\rule{0pt}{12pt}\makecell*[c]{$Q_8,\ \mz_4 \times \mz_2, \ \mz_4 \times \mz_2^2, \ \mz_4 \times \mz_2^3, \ \mz_4 \times \mz_2^4, \ \mz_3^2, \ \mz_3 \times \mz_2^3$}&\\\hline
\rule{0pt}{12pt}\makecell*[c]{$H_1:=\langle x, y~|~x^4=y^4=(xy)^2=(xy^{-1})^2=1\rangle$}& order $16$\\\hline
\rule{0pt}{12pt}\makecell*[c]{$H_2:=\langle x,y,z~|~x^4 =y^4=z^4=(yx)^2=(yx^{-1})^2=(yz)^2=(yz^{-1})^2=1,$$\,\,\,
  $$x^2=y^2=z^2,x^z=x^{-1} \rangle$}& order $16$\\\hline

\rule{0pt}{12pt}\makecell*[c]{$H_3:=\langle x,y,z~|~x^4 = y^4 = z^4 = (xy)^2 = (xy^{-1})^2 =(xz)^2= (xz^{-1})^2 =(yz)^2 = (yz^{-1})^2 = x^2y^2z^2 = 1\rangle$ }& order $32$\\\hline
\rule{0pt}{12pt}\makecell*[c]{ $D_4\circ D_4$ (the central product of $D_4$ and $D_4$)}& order 32 \\\hline
\end{tabular}
\vskip0.2cm
\caption{The exceptional groups admitting no ORRs}\label{table1}
\end{table}

The following is the classification of finite groups admitting ORRs.

\begin{prop}\label{prop=ORR}{\rm \cite[Theorem 1.2]{MorrisSpiga3}}
Let $G$ be a finite group. Then $G$ admits an ORR, except
\begin{itemize}
  \item[\rm(1)] $G$ is a generalized dihedral group of order greater than $2$;
  \item[\rm(2)] $G$ is isomorphic to one of the 11 exceptional groups given in {\sc Table}~$\ref{table1}$.
\end{itemize}
\end{prop}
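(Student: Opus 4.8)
The plan is to prove the two directions separately: first the necessity of the exceptions (the listed groups admit no ORR), and then the sufficiency (every remaining finite group admits one), with the latter carrying essentially all of the difficulty.

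For the necessity of the generalized dihedral family I would exploit the structural fact recorded in Remark \ref{rmkgdg}: if $G$ is generalized dihedral over an abelian group $H$, then every element of $G\setminus H$ is an involution. An ORR requires a connection set $R$ with $R\cap R^{-1}=\emptyset$, and any involution $r$ satisfies $r=r^{-1}$; hence $R$ can contain no involution, and in particular $R\cap(G\setminus H)=\emptyset$, so $R\subseteq H$. When $|G|>2$ the subgroup $H$ is proper and nontrivial, so $\langle R\rangle\subseteq H\subsetneq G$ and $\Cay(G,R)$ splits as a disjoint union of the two induced subdigraphs on the cosets $H$ and $Ht$ (for $t\in G\setminus H$), each isomorphic to $\Cay(H,R)$. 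Consequently $\Aut(\Cay(G,R))$ contains the wreath product $\Aut(\Cay(H,R))\wr\mz_2$, whose order is at least $2|H|^2>2|H|=|G|$; thus $G$ is strictly smaller than the full automorphism group and no such digraph can be an ORR. For the eleven exceptional groups of Table \ref{table1}, all of small order, I would argue by a finite case analysis, checking directly (by hand for the abelian families, by computer search otherwise) that for every admissible $R$ the digraph $\Cay(G,R)$ carries a nontrivial automorphism fixing the identity vertex.

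The substantial work is sufficiency, which I would organize along the solvable/non-solvable dichotomy. The non-solvable case I would import wholesale from \cite{MorrisSpiga1}. For solvable groups I would pursue a counting argument on connection sets: among all $R\subseteq G\setminus\{1\}$ with $R\cap R^{-1}=\emptyset$, I would bound the number of \emph{bad} choices for which $\Aut(\Cay(G,R))$ strictly contains $G$, and show this is smaller than the total count, forcing the existence of a \emph{good} $R$. A bad $R$ is one preserved by some nontrivial permutation of $V(\G)=G$ fixing the identity and respecting the arc set; the key reduction is that such a permutation is governed by, and in generic situations coincides with, a nontrivial automorphism $\varphi\in\Aut(G)$. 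One then sums over the candidate $\varphi$ the number of admissible $R$ fixed by $\varphi$, using that a $\varphi$-invariant set is a union of $\varphi$-orbits on $G$, and compares this against the count of all admissible $R$.

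The main obstacle, and the source of the sporadic exceptions, is exactly this comparison for groups where $\Aut(G)$ is large relative to $|G|$ or where the orbit structure leaves too little freedom, most notably groups with a large elementary abelian $2$-section (explaining the appearance of $\mz_3^2$, $\mz_3\times\mz_2^3$, $Q_8$ and the various $\mz_4\times\mz_2^k$). For these boundary cases the crude count fails, and I expect to need a refined analysis of the stabilizer $\Aut(\Cay(G,R))_1$, whose elements need not be group automorphisms at all; one must identify precisely which involutory or order-four elements of $G$ force local symmetries, and then show that outside the finite exceptional list one can always choose $R$ avoiding every such forced automorphism. Carrying out this stabilizer analysis uniformly across all solvable groups while isolating the finitely many genuine exceptions is where the bulk of the difficulty lies.
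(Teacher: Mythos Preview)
This proposition is not proved in the paper at all: it is quoted verbatim as \cite[Theorem 1.2]{MorrisSpiga3} and used as a black box throughout Section~3. There is therefore no ``paper's own proof'' to compare your proposal against; the authors simply cite Morris--Spiga and move on.

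That said, your sketch is a reasonable summary of how the Morris--Spiga programme actually runs (non-solvable case from \cite{MorrisSpiga1}, solvable case by a counting/automorphism-orbit argument with a finite residue handled ad hoc), and your argument for the generalized dihedral exclusion is correct and essentially the one given in \cite{Spiga}. But be aware that what you describe as ``the key reduction\dots\ a nontrivial permutation\dots\ is governed by, and in generic situations coincides with, a nontrivial automorphism $\varphi\in\Aut(G)$'' is precisely the hard part: for oriented digraphs there is no analogue of the Godsil lemma guaranteeing that a vertex-stabilizer element is a group automorphism, and the Morris--Spiga papers spend most of their length working around this. Your proposal acknowledges this obstacle but does not indicate a mechanism for overcoming it, so as a self-contained proof it would have a genuine gap at exactly that point. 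For the purposes of the present paper, however, no proof is required.
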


\indent Let $m\geq 1$ be an integer. Recall that an {\em $m$-Cayley digraph} of a finite group $G$ is defined as a digraph which has a semiregular group of automorphisms isomorphic to $G$ with $m$ orbits on its vertex set. Now, we consider more concrete definition for $m$-Cayley digraphs. For each $i \in \mz_m$, put $G_i:=\{g_i\ |\ g\in G\}$.  For any subset $H\subseteq G$ and $i\in \mz_m$, we also denote $H_i:=\{h_i\ |\ h\in H\}$. Similar to Cayley digraphs, an  $m$-Cayley digraph $\Gamma$ can be viewed as the digraph $$\Cay(G,T_{i,j}: i,j\in\mz_m)$$
with the vertex set and the arc set:
\[
\bigcup_{i\in \mz_m}  G_i \mbox{   and  }
\bigcup_{i,j\in\mz_m} \{(g_i, (tg)_j)~|~t\in T_{i,j}\}\]
respectively, where $T_{i,j}~(i,j\in \mz_m)$ are subsets of $G$ satisfying $1\notin T_{i,i}$. In particular, the above digraph $\Cay(G,T_{i,j}: i,j\in\mz_m)$ is also an $m$-Cayley digraph of a group $G$ (with respect to $T_{i,j} ~(i,j\in\mz_m)$ ). Note here that an $m$-Cayley digraph $\Cay(G,T_{i,j}: i,j\in\mz_m)$ is an oriented $m$-Cayley digraph if and only if $T_{i,j}\cap T_{j,i}^{-1}=\emptyset$ for all $i,j\in \mz_m$. Without loss of generality, let $\Gamma=\Cay(G,T_{i,j}: i,j\in\mz_m)$ be an $m$-Cayley (di)graph of a group $G$ with respect to $T_{i,j} ~(i,j\in\mz_m)$. For any element $g\in G$, the right multiplication $R(g)$ (i.e., mapping each vertex $x_i\in G_i$ to $(xg)_i\in G_i$ for all $i\in\mz_m$) is an automorphism of $\G$, and $R(G)$ is a semiregular group of automorphisms of $G$ with $G_i$ as orbits, where
$$R(G):=\{ R(g)\ |\ g\in G\}.$$
In particular, $2$-Cayley (di)graph $\Cay(G,T_{i,j}: i,j\in\mz_2)$ is called a {\em bi-Cayley (di)graph} and it is also denoted by $$\BiCay(G,R,L,S,T)$$
where $R=T_{0,0}$, $L=T_{1,1}$, $S=T_{0,1}$ and $T=T_{1,0}$.\\

The following proposition gives a result of the automorphisms between bi-Cayley digraph $\BiCay(G,\emptyset,\emptyset,S,T)$ and Cayley digraphs $\Cay(G,S)$ and $\Cay(G,T)$.

\begin{prop}\label{prop=Haar}{\rm \cite[Lemma 3.2]{DFS2}}
Let $G$ be a finite group and let $\sigma $ be a permutation of $G$.
Denote by $\sigma'$ the permutation of $G_0\cup G_1$ mapping
$g_i$ to $g^{\sigma}_i$ for each $g\in G$ and $i\in \mz_2$.
Then the following are equivalent.
\begin{enumerate}
\item $\sigma\in \Aut(\Cay(G,S))\cap\Aut(\Cay(G,T))$
\item $\sigma'\in\Aut(\BiCay(G,\emptyset,\emptyset,S,T))$
\end{enumerate}
\end{prop}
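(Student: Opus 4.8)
The plan is to reduce the claimed equivalence to a direct translation between arc conditions, the key structural observation being that $\BiCay(G,\emptyset,\emptyset,S,T)$ is bipartite and that $\sigma'$ respects this bipartition. First I would unwind the definitions: since $T_{0,0}=R=\emptyset$ and $T_{1,1}=L=\emptyset$, the digraph $\BiCay(G,\emptyset,\emptyset,S,T)$ has no arc inside $G_0$ and no arc inside $G_1$, so every arc runs between the two parts. Concretely, from $T_{0,1}=S$ we get the arcs $(g_0,(sg)_1)$ with $g\in G$, $s\in S$, and from $T_{1,0}=T$ we get the arcs $(g_1,(tg)_0)$ with $g\in G$, $t\in T$. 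Rewriting, for $g,h\in G$ the pair $(g_0,h_1)$ is an arc if and only if $hg^{-1}\in S$, while $(g_1,h_0)$ is an arc if and only if $hg^{-1}\in T$. I would also record the analogous description for ordinary Cayley digraphs, namely that $(g,h)\in A(\Cay(G,R))$ if and only if $hg^{-1}\in R$, which follows from the convention $A(\Cay(G,R))=\{(g,rg)\mid g\in G,\ r\in R\}$.

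Next I would exploit the defining property of $\sigma'$: it sends $g_i$ to $g^\sigma_i$ with the index $i$ unchanged, so $\sigma'$ fixes each part $G_0$ and $G_1$ setwise. Because the digraph is bipartite with exactly these two parts, $\sigma'$ necessarily carries $G_0$-to-$G_1$ pairs to $G_0$-to-$G_1$ pairs and $G_1$-to-$G_0$ pairs to $G_1$-to-$G_0$ pairs. Consequently $\sigma'\in\Aut(\BiCay(G,\emptyset,\emptyset,S,T))$ if and only if $\sigma'$ separately preserves the set of $G_0$-to-$G_1$ arcs and the set of $G_1$-to-$G_0$ arcs. This decoupling is precisely what makes the two factors $\Cay(G,S)$ and $\Cay(G,T)$ appear independently on the Cayley-digraph side.

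I would then check each half in turn. Since $\sigma'$ maps $(g_0,h_1)$ to $(g^\sigma_0,h^\sigma_1)$, preservation of the $G_0$-to-$G_1$ arcs says exactly that $hg^{-1}\in S$ holds if and only if $h^\sigma(g^\sigma)^{-1}\in S$ for all $g,h\in G$; by the arc description of $\Cay(G,S)$ recorded above, this is literally the statement $\sigma\in\Aut(\Cay(G,S))$. Running the identical computation with $T$ in place of $S$ shows that preservation of the $G_1$-to-$G_0$ arcs is equivalent to $\sigma\in\Aut(\Cay(G,T))$. Combining these two equivalences with the decoupling from the previous step gives $\sigma'\in\Aut(\BiCay(G,\emptyset,\emptyset,S,T))$ if and only if $\sigma\in\Aut(\Cay(G,S))\cap\Aut(\Cay(G,T))$, which is the asserted equivalence.

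I do not expect a serious obstacle here; the content is careful bookkeeping of indices under the fixed convention that an arc of $\Cay(G,R)$ from $g$ to $h$ encodes $hg^{-1}\in R$. The single point demanding genuine care is the verification that $\sigma'$ preserves the bipartition into $G_0$ and $G_1$, since it is exactly this fact that forbids the two arc-types from being interchanged and thereby allows the one automorphism condition on the bi-Cayley digraph to split cleanly into the two separate Cayley-digraph conditions.
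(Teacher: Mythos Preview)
Your argument is correct. The paper does not supply its own proof of this proposition; it is simply quoted from \cite[Lemma~3.2]{DFS2}, so there is no in-paper argument to compare against. Your direct unwinding of the arc conditions---using that $\sigma'$ fixes each part $G_0$, $G_1$ setwise so that the automorphism condition on $\BiCay(G,\emptyset,\emptyset,S,T)$ decouples into the two Cayley conditions---is exactly the natural proof and matches what one finds in the cited reference.
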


%%%%%%%%%%%%%%%%%%%%%%%%%%%%%%%%%%%%%%%%%%%%%
To end this section, we set up some notations for this paper.

\begin{notation}\label{arcdef}
\begin{enumerate}
\item Let $\G$ be a (di)graph. For any subset $X$ of the vertex set $V(\Gamma)$, denote by $\Gamma[X]$ the induced sub-(di)graph by $X$ in $\Gamma$. We simply write $[X]$ for $\Gamma[X]$ if there are no confusions.
\item Let $\G$ be a digraph. For a vertex $x\in V(\G)$, denote by $\G^+(x)$ and $\G^-(x)$ the out- and in-neighbors of $x$ in $\G$, respectively. For a subset $X\subseteq V(\G)$, define
$$\G ^{+}(X):=\bigcup_{x\in X}\G^{+}(x) \mbox{~~~and~~~}\G ^{-}(X):=\bigcup_{x\in X}\G^{-}(x).$$
\item Let $\G$ be a digraph and let $X,Y$ be subsets of $V(\G)$. Recall that $A([X])$  is the set of arcs of the induced sub-digraph $[X]$ by $X$ in $\G$ (i.e., the set of arcs of $\G$ in $X\times X$). Denote by $A(X,Y)$ the set of arcs of $\G$ in $X\times Y$. If $X=Y$ then we simply write $A(X)$ instead of $A(X,X)$. If $X=\{x\}$ or $Y=\{y\}$ then we simply write
$$A(x,Y):=A(\{x\},Y), ~~A(X,y):=A(X,\{y\}) \mbox{~~and~~} A(x,y):=A(\{x\},\{y\})$$
 if there are no confusions.
\end{enumerate}
\end{notation}

\section{Proof of Theorem~\ref{theo=main}}

In this section, we prove Theorem~\ref{theo=main}. We first consider the case when given group admits an ORR.

\begin{lem}\label{lem=ORR}
Let $G$ be a finite group of order at least $3$. If $G$ admits an ORR, then $G$ admits an O$m$SR for any integer $m\geq 2$.
\end{lem}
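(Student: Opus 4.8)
The plan is to build an oriented $m$-Cayley digraph of $G$ from a known ORR by stacking $m$ copies of the ORR along a "diagonal" and then connecting the $m$ layers with directed bonds that are chosen to be regular, digon-free, and rigid enough to force any automorphism to permute the layers. Concretely, suppose $\Cay(G,R)$ is an ORR, so $R\cap R^{-1}=\emptyset$ and $\Aut(\Cay(G,R))=R(G)$. I would set $T_{i,i}=R$ for every $i\in\mz_m$, so each layer $G_i$ induces a copy of the ORR. For the off-diagonal sets I would use the identity: take $T_{i,i+1}=\{1\}$ for $i\in\mz_m$ (indices mod $m$) and $T_{i,j}=\emptyset$ for all other pairs $i\ne j$. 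Then from vertex $g_i$ there is exactly one extra out-arc, namely to $g_{i+1}$, and one extra in-arc, from $g_{i-1}$; together with the $|R|$ out-arcs and $|R|$ in-arcs inside the layer, every vertex has in- and out-valency $|R|+1$, so the digraph is regular. It is oriented because $T_{i,i}=R$ has no digons, and $T_{i,i+1}\cap T_{i+1,i}^{-1}=\{1\}\cap\emptyset=\emptyset$ when $m\ge 3$ (and for $m=2$ one must instead take $T_{0,1}=\{1\}$, $T_{1,0}=\emptyset$, which still has no digon). Call this digraph $\Gamma$.

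Next I would compute $\Aut(\Gamma)$. Certainly $R(G)\le\Aut(\Gamma)$, so it suffices to show equality. The key structural observation is that the "extra" arcs — those not lying inside a single layer — form, after contracting each layer to a point, a directed $m$-cycle $G_0\to G_1\to\cdots\to G_{m-1}\to G_0$; more precisely the bipartite graph of extra arcs between $G_i$ and $G_{i+1}$ is a perfect matching (the map $g_i\mapsto g_{i+1}$), and there are no extra arcs between non-consecutive layers. I would argue that any $\varphi\in\Aut(\Gamma)$ must preserve the partition $\{G_0,\dots,G_{m-1}\}$: a vertex $x$ and its layer can be recovered combinatorially, because the only arcs $x$ participates in that belong to a digon-free "triangle-free in a controlled way" pattern... — more robustly, I would identify the layers as follows. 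Within $\Gamma[G_i]$ the induced digraph is the ORR $\Cay(G,R)$, which is connected (or, if $R$ doesn't generate $G$, one replaces $R$ by a connected generating ORR, which exists since $G$ admits an ORR and one may enlarge $R$... actually one needs to be slightly careful here). Since $R(G)$ is transitive on each $G_i$ and the extra-arc digraph between consecutive layers is a single $R(G)$-orbit matching, the "layer adjacency pattern" is rigid; any automorphism either fixes each $G_i$ setwise or cyclically rotates them. In the latter case, composing with a suitable extra automorphism is impossible unless such a rotation is realized, which it is not here because the layers all carry the \emph{same} labelled ORR but are glued by the identity matching, and a rotation $G_i\to G_{i+1}$ sending $g_i\mapsto g_i$ would have to extend to an automorphism of the whole thing — so this case actually \emph{does} occur and one must handle it. I would instead break the symmetry between layers by making the diagonal sets slightly different, e.g. $T_{i,i}=R$ for all $i$ but $T_{0,1}=\{1\}$ while $T_{m-1,0}=\{a\}$ for a carefully chosen $a\neq 1$, or by adding an arc from $G_0$ to $G_0$ that distinguishes layer $0$; the cleanest fix is to choose the matchings so that the composite "go around the $m$-cycle" permutation of $G_0$ is a non-identity element $w\in G$ with $\langle w\rangle$ chosen so that no automorphism of the layered structure can realize a rotation. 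Then a layer-preserving $\varphi$ restricts on each $G_i$ to an automorphism of the ORR, hence to an element of $R(G)$ on $G_0$, and the matchings force it to be the \emph{same} element on every layer, so $\varphi\in R(G)$.

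The main obstacle, and the step I would spend the most care on, is exactly this rigidity argument: ensuring the gluing data (the off-diagonal matchings $T_{i,i+1}$) are chosen so that (a) the digraph stays oriented — no $T_{i,j}$ collides with $T_{j,i}^{-1}$, which for $m\ge 3$ is automatic if each $|T_{i,i+1}|=1$ and $T_{j,i}=\emptyset$ for $i<j$ except the wrap-around, and for the wrap-around arc one just checks $a\notin\emptyset$; and (b) no nontrivial rotation of the layers extends to a digraph automorphism, while also (c) a layer-fixing automorphism is forced to act identically across layers. I expect (b) and (c) together can be secured by a single trick: use the identity matching for $T_{i,i+1}$, $i=0,\dots,m-2$, and $T_{m-1,0}=\{1\}$ as well, but additionally attach to layer $0$ only a small gadget — for instance, pick one extra element $r_0\in G$ and note that since $|G|\ge 3$ we can add... — honestly the most economical route is: keep all matchings equal to the identity, accept that the layer-rotation $\rho:g_i\mapsto g_{i+1}$ (indices mod $m$) \emph{is} an automorphism of this symmetric construction, giving $\Aut(\Gamma)=R(G)\times\langle\rho\rangle\cong G\times\mz_m$, which is \emph{not} an O$m$SR; and then repair it by replacing exactly one matching, say $T_{0,1}$, with $\{1\}$ but \emph{also} deleting the in-layer arc structure symmetry by instead changing $T_{0,0}$ to $R\cup R'$ for a slightly larger digon-free $R'$ that still yields an ORR on $G_0$ only — no. I will commit to the clean version: set $T_{i,i}=R$ for all $i$, $T_{i,i+1}=\{1\}$ for $0\le i\le m-2$, and $T_{m-1,0}=\{s\}$ where $s\in G\setminus\{1\}$ is any element with $s\ne s^{-1}$ is not required; we just need $s\notin (T_{0,m-1})^{-1}=\emptyset$, fine. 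The composite around-the-cycle map on $G_0$ is right-multiplication by $s$. A rotation-type automorphism would conjugate this to right-multiplication by $1$, impossible since $s\neq 1$; hence every automorphism fixes each layer, restricts to an ORR-automorphism $=$ some $R(g_i)$ on $G_i$, and the matchings (being right-multiplications by fixed group elements) force $g_0=g_1=\cdots=g_{m-1}$, so $\Aut(\Gamma)=R(G)\cong G$. Verifying these three claims rigorously — regularity (immediate), orientedness (a short case check on the pairs $(i,j)$), and the rigidity/layer-preservation (the real content, using transitivity of $R(G)$ on each layer and connectedness of the ORR to pin down layers, then the matching-consistency to pin down $\varphi$) — is the body of the proof, and the rigidity step is where all the difficulty lies.
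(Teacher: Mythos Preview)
Your proposal has a genuine gap in both the $m=2$ case and the $m\ge 3$ case.

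For $m=2$ you suggest $T_{0,1}=\{1\}$ and $T_{1,0}=\emptyset$. This digraph is not regular: vertices in $G_0$ have out-valency $|R|+1$ and in-valency $|R|$, while vertices in $G_1$ have the reverse. An O$m$SR must be a \emph{regular} oriented $m$-Cayley digraph, so this construction is disqualified before any automorphism computation begins.

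For $m\ge 3$ your final construction ($T_{i,i}=R$ for all $i$, $T_{i,i+1}=\{1\}$ for $i<m-1$, $T_{m-1,0}=\{s\}$ with $s\ne 1$) does not in general yield an O$m$SR, and your rotation-breaking argument is incorrect. Take $G=\mz_3=\langle a\rangle$ with $R=\{a\}$ (a genuine ORR), $m=3$, and $s=a$. Then the map $\varphi$ defined by $h_0\mapsto h_1$, $h_1\mapsto h_2$, $h_2\mapsto (ah)_0$ is an automorphism of your $\Gamma$: it sends each layer arc $(g_i,(ag)_i)$ to another layer arc, and each cross arc to another cross arc (check: $(g_2,(ag)_0)\mapsto((ag)_0,(ag)_1)$, an arc since $T_{0,1}=\{1\}$). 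Since $\varphi$ permutes the layers nontrivially, $\varphi\notin R(G)$ and $\Aut(\Gamma)\ne R(G)$. The same happens for $s=a^2$, so no choice of $s$ rescues the construction for $\mz_3$. The flaw in your heuristic is that the ``around-the-cycle'' composite (left multiplication by $s$) is \emph{invariant} under a cyclic relabelling of layers combined with the appropriate in-layer twist, not conjugated to the identity; for abelian $G$ this twist always exists.

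The paper avoids these problems by making the layers combinatorially distinguishable. In the $m=2$ case it uses $\BiCay(G,R,R^{-1},\{1\},\{a^{-1}\})$ (so both $T_{0,1}$ and $T_{1,0}$ are singletons, restoring regularity), and for $m\ge 3$ it mixes $R$ and $R^{-1}$ on different diagonals together with carefully chosen two-way singleton matchings. The crucial step you are missing is an \emph{invariant that separates layers}: the paper computes $|A([\Gamma_m^+(1_i)])|$ and shows these counts differ across $i$, forcing $\Aut(\Gamma_m)$ to fix each $G_i$ setwise. Only after that does the ORR property of the layers and the matchings finish the job. Your sketch never establishes layer-preservation, and as the $\mz_3$ example shows, with your symmetric choice of diagonals it can simply fail.
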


\begin{proof}
Assume that $G$ admits an ORR. That is, there exists a subset $R\subseteq G$ such that $\Cay(G,R)$ is an ORR of $G$. If $R=\emptyset$ then $\Cay(G,R)$ is the union of $|G|$ isolated vertices. Since $\Cay(G,R)$ is an ORR, $G$ is $\mz_1$ or $\mz_2$ which is impossible as $|G|\geq 3$. Thus $$R\neq\emptyset.$$
Let $\Sigma:=\Cay(G,R)$, $\Phi:=\Cay(G,R^{-1})$ and $E_H:=\{(x,y,z)\ |\ x,y,z\in H, xy=z\}$ for $H\in \{R, R^{-1}\}.$  As $(x,y,z)\in E_R$ if and only if $(y^{-1},x^{-1},z^{-1})\in E_{R^{-1}}$, we have $|E_R|=|E_{R^{-1}}|$. Put $k:=|E_R|=|E_{R^{-1}}|$.
As $|A([\Sigma^+(1)])|=|E_R|$ and $|A([\Phi^+(1)])|=|E_{R^{-1}}|$ hold (see Notation \ref{arcdef}), we obtain
\begin{equation}\label{ak}
|A([\Sigma^+(1)])|=|A([\Phi^+(1)])|=k
\end{equation}
where $1$ is the identity of $G$. As $R\neq\emptyset$, take $a\in R$. Note that
\begin{eqnarray}\label{Eq1}
R\cap R^{-1}=\emptyset, \ \ a\in R, \ \ a^{-1}\not\in R,\ \ o(a)\geq 3,\ \ 1\not\in R\cup R^{-1}
\end{eqnarray}
all hold, where $o(a)$ is the order of $a$ in $G$. Now, we divide the proof into two cases: $m=2$ (Case 1) and $m\geq 3$ (Case 2). \\

\f{\bf Case 1:} Show that $G$ admits an O$2$SR. \\
\noindent{\em Proof of Case 1. }Let $\G_2:=\BiCay(G,R,R^{-1},\{1\},\{a^{-1}\})$ and $\mathcal{A}:=\Aut(
\G_2)$. Then $\G_2$ is an oriented $2$-Cayley digraph of $G$ with valency $|R|+1$ satisfying
$\G_2^+(1_0)=\{r_0,1_1~|~r\in R\}, \G_2^+(1_1)=\{r^{-1}_1,a^{-1}_0~|~r\in R\}$ and $\G_2^{-}(1_1)=\{r_1,1_0~|~r\in R\}.$\\
We first show that $\mathcal{A}$ is not vertex-transitive by considering $|A([\G_2^+(1_0)])|$ and $|A([\G_2^+(1_1)])|$. As $[\G_2^+(1_0)\cap G_0]\cong [\Sigma^+(1)]$, we need to count the number of arcs between $r_0$ and $1_1$ for $r\in R$. Since
$$\G_2^+(1_1)\cap \G_2^+(1_0)=\emptyset \ \mbox{ and } \ \G_2^{-}(1_1)\cap \G_2^+(1_0)=\emptyset$$ holds (see (\ref{Eq1})), $1_1$ is an isolated vertex in $[\G_2^+(1_0)]$. Hence $|A([\G_2^+(1_0)])|=|A([\Sigma^+(1)])|=k$ follows by (\ref{ak}). On the other hand, $|A([\G_2^+(1_1)])|\geq |A([\Phi^+(1)])|+1=k+1$ holds since $(a^{-1}_0,a^{-1}_1)$ is an arc in $[\G_2^+(1_1)]$ by $T_{0,1}=\{1\}$. Therefore $[\G_2^+(1_0)]\not\cong[\G_2^+(1_1)]$ and $\mathcal{A}$ is not vertex-transitive. Recall that $R(G)$ is a group of automorphisms of $\G_2$ with two orbits $G_0$ and $G_1$, and $\mathcal{A}$ also has two orbits $G_0$ and $G_1$ satisfying $\mathcal{A}=R(G)\mathcal{A}_{1_0}$. Since $\mathcal{A}_{1_0}$ fixes $G_0$ setwise and $[G_0]\cong \Sigma $ is an ORR, $\mathcal{A}_{1_0}$ fixes $G_0$ pointwise. Moreover, $\mathcal{A}_{1_0}$ fixes $G_1$ pointwise as $T_{0,1}=\{1\}$. Therefore, $\mathcal{A}_{1_0}=1$ and $\mathcal{A}=R(G)\mathcal{A}_{1_0}=R(G)$. This shows that $\G_2$ is an O$2$SR of $G$. This completes the proof of Case 1.\qed \\

\f{\bf Case 2:} For $m\ge3$, show that $G$ admits an O$m$SR.\\
\noindent{\em Proof of Case 2. } For $m\ge3$, define a subset $T_{i,j}\subseteq G~(i,j\in \mz_m)$ as follows:
\[
\begin{array}{ll}
T_{0,1}=\{1\},~\,\,\,\,\,\,\,\,\,\,T_{1,0}=\{a^{-1}\},~T_{1,1}=R^{-1},~T_{m-1,0}=\{a\}~ ; &\nonumber \\
T_{i,i-1}=\{1\},~\,\,\,\,\,T_{i,i}=R  &\mbox{ for } i\neq 1~; \nonumber\\
T_{i,i+1}= \{a^{-1}\} &\mbox{ for } i\ne 0,m-1~; \nonumber \\
T_{i,j}=\emptyset  &\mbox{ for } i\ne j, j\pm1. \nonumber
\end{array}
\]
Let $\G_m:=\Cay(G,T_{i,j}:i,j \in \mz_m)$. Then $\G_m$ is an oriented $m$-Cayley digraph of $G$ with valency $|R|+2$. To complete the proof for Case 2, we need the following claim that counts the number of arcs in the induced digraph $[\G_m^+(1_i)]$ for $i\in \mz_m$.

\smallskip
\f{\bf Claim:}
\smallskip

\f(i) $|A([\G_3^+(1_{0})])|=k+2$, $|A([\G_m^+(1_{0})])|=k+1$ $(m\geq4).$\\
(ii) $|A([\G_3^+(1_{1})])|=k+3$, $|A([\G_m^+(1_{1})])|=k+2$ $(m\geq4).$\\
(iii) $|A([\G_m^+(1_{m-1})])|=k+1$ $(m\geq 3).$\\
(iv) $|A([\G_m^+(1_{i})])|=k~(2\leq i\leq m-2 ).$
\medskip

\noindent{\em Proof of Claim}.
\begin{figure}
  \centering
  \includegraphics[width=10cm]{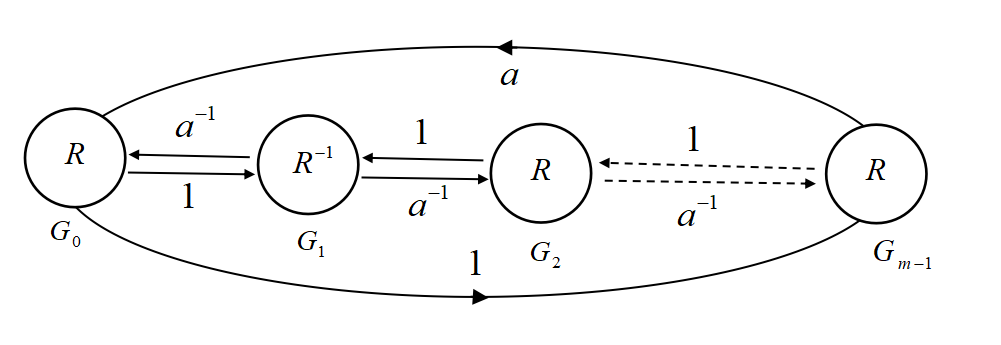}
\caption{The digraph $\G_m$~for $m\geq 3$}\label{Fig1}
\end{figure}
We first need to consider the set of out- and in- neighbors of $1_i$, $a_i$ and $a_i^{-1}$ in $\Gamma_m$ for $i\in \mz_m$. With the aid of {\sc Figure}~\ref{Fig1}, we find the following:
\[\begin{array}{ll}
\G_m^+(1_0)=\{r_{0},1_1,1_{m-1}~|~r\in R\},&~~\G_m^-(1_0)=\{r^{-1}_{0},a_1,a^{-1}_{m-1}~|~r\in R\},\\
\G_m^+(1_1)=\{r^{-1}_{1},a^{-1}_2,a^{-1}_{0}~|~r\in R\},&~~\G_m^-(1_1)=\{r_1,1_2,1_0~|~r\in R\}, \\
\G_m^+(1_{m-1})=\{r_{m-1},a_0,1_{m-2}~|~r\in R\},&~~ \\
\G_m^+(1_i)=\{r_{i},a^{-1}_{i+1},1_{i-1}~|~r\in R\} ~(2\leq i\leq m-2 ),&~~\G_m^-(1_i)=\{r^{-1}_{i},1_{i+1},a_{i-1}~|~r\in R\} ~(i\not=0,1),\\
\G_m^+(a^{-1}_0)=\{(ra^{-1})_{0},a^{-1}_1,a^{-1}_{m-1}~|~r\in R\},& ~~\G_m^-(a^{-1}_0)=\{(r^{-1}a^{-1})_{0},1_1,a^{-2}_{m-1}~|~r\in R\},\\
\G_m^+(a_{m-1}^{-1})=\{(ra^{-1})_{m-1},1_0,a^{-1}_{m-2}~|~r\in R\},& ~~\\
\G_m^+(a^{-1}_{i+1})=\{(ra^{-1})_{i+1},a^{-2}_{i+2},a^{-1}_{i}~|~r\in R\} ~(1\leq i\leq m-3),&~\G_m^-(a_{i+1}^{-1})=\{(r^{-1}a^{-1})_{i+1},a^{-1}_{i+2},1_{i}~|~r\in R\}~(i\ne 0,m-1),\\
\G_m^+(a_0)=\{(ra)_{0},a_1,a_{m-1}~|~r\in R\},& ~~\G_m^-(a_0)=\{(r^{-1}a)_{0},a^2_1,1_{m-1}~|~r\in R\}.\\
\end{array}
\]

The above sets enable us to prove the claim as follows.

\noindent{\em Proof of (i). }It is easy to find
\[\begin{array}{llll}
\G_3^+(1_1)\cap \G_3^+(1_0)=\emptyset,& \G_3^+(1_2)\cap \G_3^{+}(1_{0})=\{a_0,1_1\},&\G_3^-(1_1)\cap \G_3^+(1_0)=\{1_2\}, &\G_3^-(1_2)\cap \G_3^{+}(1_{0})=\emptyset~;\\
\G_m^+(1_1)\cap \G_m^+(1_0)=\emptyset,&\G_m^+(1_{m-1})\cap \G_m^{+}(1_0)=\{a_0\},&
\G_m^-(1_1)\cap \G_m^+(1_0)=\emptyset,&\G_m^-(1_{m-1})\cap \G_m^{+}(1_0)=\emptyset~(m\geq 4).
\end{array}
\]
As $|A([\G_m^+(1_0)])|=|A(R_0)|+|A(1_1,R_0\cup \{1_{m-1}\})|+|A(1_{m-1},R_0\cup\{1_1\})|-|A(1_1,1_{m-1})|$ holds (see Notation \ref{arcdef}), the result (i) now follows by
\begin{eqnarray*}
|A(R_0)|&=&|[\G_m^+(1_0)\cap G_0]|= |[\Sigma^+(1)]|=k~;\\
|A(1_1,R_0\cup \{1_{m-1}\})|&=&|\G_m^+(1_1)\cap \G_m^+(1_0)|+|\G_m^-(1_1)\cap \G_m^+(1_0)|~;\\
|A(1_{m-1},R_0\cup\{1_0\})|&=&|\G_m^+(1_{m-1})\cap \G_m^+(1_0)|+|\G_m^-(1_{m-1})\cap \G_m^+(1_0)|~;\\
|A(1_1,1_2)|&=&1~;\\
|A(1_1,1_{m-1})|&=&0
\end{eqnarray*}
for $m\geq 4$.\\

\noindent{\em Proof of (ii). }It is easy to find
\[
\begin{array}{llll}
\G_3^+(a^{-1}_0)\cap \G_3^+(1_1)=\{a_1^{-1},a_2^{-1}\},~ &\G_3^-(a^{-1}_0)\cap \G_3^{+}(1_1)=\emptyset,~&\G_3^+(a^{-1}_2)\cap \G_3^+(1_1)=\{a_1^{-1}\},~&\G_3^-(a^{-1}_2)\cap \G_3^{+}(1_1)=\{a_0^{-1}\}~;\\
 \G_m^+(a^{-1}_0)\cap \G_m^+(1_1)=\{a_1^{-1}\}, ~&\G_m^-(a^{-1}_0)\cap \G_m^{+}(1_1)=\emptyset, ~
&\G_m^+(a^{-1}_2)\cap \G_m^+(1_1)=\{a_1^{-1}\}, ~&\G_m^-(a^{-1}_2)\cap \G_m^{+}(1_1)=\emptyset ~(m\geq 4).\\
\end{array}
\]
The result (ii) follows by
\begin{eqnarray*}
|A([\G_m^+(1_1)])| &= &k-|A(a_0^{-1},a_2^{-1})|+\\
&& |\G_m^+(a_0^{-1})\cap \G_m^+(1_1)|+|\G_m^-(a_0^{-1})\cap \G_m^+(1_1)|+|\G_m^+(a_2^{-1})\cap  \G_m^+(1_1)|+|\G_m^-(a_2^{-1})\cap \G_m^+(1_1)|
\end{eqnarray*}
and $|A(a_0^{-1},a_2^{-1})|=1$ for $m=3$, and $|A(a_0^{-1},a_2^{-1})|=0$ for $m\geq 4$. \\

\noindent{\em Proof of (iii). }For $m\geq 3$, it is easy to find
\begin{eqnarray*}
\G_m^+(a_0)\cap \G_m^+(1_{m-1})=\{a_{m-1}\}, ~\G_m^-(a_0)\cap \G_m^+(1_{m-1})=\emptyset, ~\G_m^+(1_{m-2})\cap \G_m^+(1_{m-1})=\emptyset, ~\G_m^-(1_{m-2})\cap \G_m^+(1_{m-1})=\emptyset.
\end{eqnarray*}
As $|A(a_0,1_{m-2})|=0$ holds by {\sc Figure}~\ref{Fig1}, we obtain (iii):
\begin{eqnarray*}
|A([\G_m^+(1_{m-1})])| &= &k-|A(a_0,1_{m-2})|+|\G_m^+(a_0)\cap \G_m^+(1_{m-1})|+|\G_m^-(a_0)\cap \G_m^+(1_{m-1})|\\&&
+|\G_m^+(1_{m-2})\cap \G_m^+(1_{m-1})|+|\G_m^-(1_{m-2})\cap \G_m^+(1_{m-1})|\\&=&k+1.
\end{eqnarray*}

\noindent{\em Proof of (iv). }Let $m\geq 4$. Since there are no arcs between $1_{i-1}$ and $a^{-1}_{i+1}$ $(2\leq i\leq m-2)$, we obtain
\begin{equation}\label{cliv}
|A(1_{i-1},a^{-1}_{i+1})|=0 ~(2\leq i\leq m-2).
\end{equation}
We divide the proof of (iv) into three cases $i=2$, $i=m-2$ and $3\leq i\leq m-3$.\\
First, we consider $3\leq i\leq m-3$. Then $m\geq 6$ and
\begin{eqnarray*}
\G_m^+(1_{i-1})\cap \G_m^+(1_{i})=\emptyset,~\G_m^-(1_{i-1})\cap \G_m^+(1_{i})=\emptyset,~
\G_m^+(a^{-1}_{i+1})\cap \G_m^+(1_{i})=\emptyset, ~\G_m^-(a^{-1}_{i+1})\cap \G_m^+(1_{i})=\emptyset.
\end{eqnarray*}
Using (\ref{cliv}), we have
\begin{eqnarray*}
|A([\G_m^+(1_i)])| &= &k-|A(1_{i-1},a^{-1}_{i+1})|+|\G_m^+(1_{i-1})\cap \G_m^+(1_{i})|+|\G_m^-(1_{i-1})\cap \G_m^+(1_{i})|
+|\G_m^+(a^{-1}_{i+1})\cap \G_m^+(1_{i})|+|\G_m^-(a^{-1}_{i+1})\cap \G_m^+(1_{i})|\\&=&k
\end{eqnarray*}
and this shows (iv) for $3\leq i\leq m-3$.\\
Now we consider $i=2$. By (\ref{cliv}) and
\begin{eqnarray*}
\G_m^+(1_1)\cap \G_m^+(1_{2})=\emptyset, ~\G_m^-(1_1)\cap \G_m^+(1_2)=\emptyset,~\G_m^+(a^{-1}_{3})\cap \G_m^+(1_{2})=\emptyset,~
\G_m^-(a^{-1}_{3})\cap \G_m^+(1_{2})=\emptyset,
\end{eqnarray*}
we obtain
\begin{eqnarray*}
|A([\G_m^+(1_2)])| &= &k-|A(1_{1},a^{-1}_{3})|+|\G_m^+(1_1)\cap \G_m^+(1_{2})|+|\G_m^-(1_1)\cap \G_m^+(1_2)|+|\G_m^+(a^{-1}_{3})\cap \G_m^+(1_{2})|+|\G_m^-(a^{-1}_{3})\cap \G_m^+(1_{2})|\\&=&k
\end{eqnarray*}
and this shows (iv) for $i=2$.\\
Finally, let $i=m-2$. Note that $m\geq5$ follows by $i=m-2\neq 2$ and $m\geq 4$. By (\ref{cliv}) and
\begin{eqnarray*}
\G_m^+(1_{m-3})\cap \G_m^+(1_{m-2})=\emptyset,~\G_m^-(1_{m-3})\cap \G_m^+(1_{m-2})=\emptyset,~\G_m^+(a^{-1}_{m-1})\cap \G_m^+(1_{m-2})=\emptyset, ~\G_m^-(a^{-1}_{m-1})\cap \G_m^+(1_{m-2})=\emptyset,
\end{eqnarray*}
we obtain (iv):
\begin{eqnarray*}
|A([\G_m^+(1_{m-2})])|&= &k-|A(1_{m-3},a^{-1}_{m-1})|+|\G_m^+(1_{m-3})\cap \G_m^+(1_{m-2})|+|\G_m^-(1_{m-3})\cap \G_m^+(1_{m-2})|\\&&+|\G_m^+(a^{-1}_{m-1})\cap \G_m^+(1_{m-2})|+|\G_m^-(a^{-1}_{m-1})\cap \G_m^+(1_{m-2})|\\&=&k .
\end{eqnarray*}
This completes the proof of the claim. \qed\\

Now we are ready to complete the proof of Case 2 in Lemma \ref{lem=ORR} by using the above claim. Let $\mathcal{A}:=\Aut(\G_m)$. As $|A([\G_m^+(1_1)])|\ne |A([\G_m^+(1_i)])|~(i\neq 1)$ by the claim, we have $[\G_m^+(1_1)]\not\cong [\G_m^+(1_i)]$ $(i\neq 1)$ and thus $\mathcal{A}$ fixes $G_1$ setwise. By $|A([\G_m^+(1_0)])|\neq |A([\G_m^+(1_2)])|$ (see the above claim), $\mathcal{A}$ fixes $G_i~(i=0,2)$ setwise. By $|T_{2,3}|=1$ and
$|T_{2,i}|=0$ $(i\neq 1,2,3)$, $\mathcal{A}$ fixes $G_3$ setwise. Similary, we obtain that $\mathcal{A}$ fixes $G_i~(i\in \mz_m)$ setwise. Recall that $R(G)$ is a group of automorphisms of $\G_m$ with $m$ orbits $G_i~(i\in \mz_m)$. Then $\mathcal{A}$ also has $m$ orbits satisfying $\mathcal{A}=R(G)\mathcal{A}_{1_0}$. Since $\mathcal{A}_{1_0}$ fixes $G_i$ setwise and $[G_0\cup G_1]\cong\G_2$ is an O$2$SR, $\mathcal{A}_{1_0}$ fixes $G_0$ and $G_1$ pointwise. As $T_{0,m-1}=\{1\}$, $\mathcal{A}_{1_0}$ fixes $G_{m-1}$ pointwise. Similarly,
we find that $\mathcal{A}_{1_0}$ fixes $G_{i}$ $(2\leq i\leq m-2)$ pointwise and hence $\mathcal{A}_{1_0}=1$. Thus, $\mathcal{A}=R(G)\mathcal{A}_{1_0}=R(G)$ holds and therefore $\G_m$ is an O$m$SR of $G$. This completes the proof of Case 2 in Lemma \ref{lem=ORR}.
\end{proof}

Now, we consider elementary abelian $2$-groups in two cases: of order at most $16$ (Lemma~\ref{lem=elementary abelian 2-group order le 16}); of order greater than $16$  (Lemma~\ref{lem=elementary abelian 2-group}). We need the following notation and remark on oriented digraphs with valency two.

\begin{notation}\label{defc}
Let $\Delta$ be an oriented digraph with valency two.
Define $\mathcal{C}(\Delta)$ by the set of oriented $3$-cycles in $\Delta$ and let $V(\mathcal{C}(\Delta))$ be the set of vertices in $\mathcal{C}(\Delta)$. For a subset $S\subseteq V(\Delta)$, define $\mathcal{C}_{S}(\Delta)$ by the set of cycles in $\mathcal{C}(\Delta)$ containing at least one vertex of $S$. For notational simplicity, put $\mathcal{C}_{s}(\Delta):=\mathcal{C}_{\{s\}}(\Delta)$. For a subset $W\subseteq \mathcal{C}(\Delta)$, define $N_{\Delta}(W)$ by the set of vertices and their neighbors of each cycle in $W$. Put
$N_{\Delta}(C):=N_{\Delta}(\{C\})$. If there are no confusions, we may omit $(\Delta)$ for each notation; $\mathcal{C}(\Delta)$, $V(\mathcal{C}(\Delta))$, $\mathcal{C}_{S}(\Delta)$, $\mathcal{C}_{s}(\Delta)$ and $N_{\Delta}(W)$.
\end{notation}

The following simple observation on oriented digraphs with valency two will be used in the proof of Lemma \ref{lem=elementary abelian 2-group order le 16}.

\begin{rem} \label{case1rmk}
Let $\Delta$ be an oriented digraph with valency two and let $\sigma \in \Aut(\Delta)$. Suppose that $\sigma $ fixes a vertex $v\in V(\Delta)$.
\begin{enumerate}
\item If $\sigma$ fixes an out-neighbor and an in-neighbor of $v$, then $\sigma$ fixes every neighbors of $v$ in $\Delta$.
\item Suppose that $\sigma$ fixes an oriented cycle $C\in \mathcal{C}(\Delta)$ pointwise.\\
\indent (i) Then $\sigma$ fixes $N_{\Delta}(C)$ pointwise. Moreover, if $s\in C$ then $\sigma$ fixes every vertex of $N_{\Delta}(\mathcal{C}_{s})$ in $\Delta$.\\
\indent (ii) Let $S\subseteq V(\Delta)$. If $C\cap S\ne \emptyset$ and cycles in $\mathcal{C}_S$ are connected to one another, then $\sigma$ fixes $N_{\Delta}(\mathcal{C}_S) $ pointwise.
\end{enumerate}
\end{rem}

\begin{lem}\label{lem=elementary abelian 2-group order le 16}
Let $m\geq 1$ be an integer. Each group $\mz_2^n~(0\leq n\leq 4)$ satisfies the following.
\begin{enumerate}
\item  $\mz_1$ does not admit O$m$SRs if and only if $2\leq m\leq 6$.
\item  $\mz_2$ does not admit O$m$SRs if and only if $m=2$.
\item  For each integer $2\leq n\leq 4$, $\mz_2^n$ does not admit O$m$SRs if and only if $m=1$ or $2$.
\end{enumerate}
\end{lem}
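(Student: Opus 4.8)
The plan is to treat the three statements as a finite verification problem, since each only concerns the small groups $\mz_2^n$ with $0\le n\le 4$ and, for $\mz_1$, the range $1\le m\le 6$. The ``only if'' directions (i.e.\ these groups \emph{do} admit an O$m$SR for the remaining $m$) and the ``if'' directions (they do \emph{not} for the excluded $m$) must be handled separately.

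\textbf{The easy boundary.} For large $m$ a counting obstruction kills everything. A regular oriented $m$-Cayley digraph of $G$ of out-valency $d$ has $m|G|$ vertices and each of the $m^2$ blocks $T_{i,j}$ contributes at most $|G|$ arcs; orientedness forces $T_{i,j}\cap T_{j,i}^{-1}=\emptyset$, and a short pigeonhole/counting estimate on the number of vertices versus the number of possible out-neighbourhood ``types'' shows that once $m$ is large enough relative to $|G|$, no such digraph can have automorphism group exactly $G$ (there are simply not enough non-isomorphic local configurations to break all the extra symmetry, or alternatively the digraph is forced to contain a $\mz_m$-type symmetry permuting the parts). For $G=\mz_1$ this pins the threshold at $m\le 6$ not admitting, $m\ge 7$ admitting; for $\mz_2$ the relevant obstruction only bites at $m=2$; for $2\le n\le 4$ it bites at $m=1,2$. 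The $m=1$ cases are immediate from Proposition~\ref{prop=ORR} (these are exactly the elementary abelian $2$-groups among the excluded groups, or rather $\mz_2^n$ for $n\le4$ are all generalized dihedral over themselves, hence admit no ORR) together with the observation that $\mz_1$ and $\mz_2$ trivially admit an ORR (the empty digraph, resp.\ $\Cay(\mz_2,\emptyset)$ — wait, one must check valency; in fact $\mz_1,\mz_2$ do admit ORRs as single vertices / the one-arc digraph is a digon so care is needed, but these tiny cases are checked by hand).

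\textbf{The constructions (``only if'' direction).} For each pair $(n,m)$ outside the excluded set I would exhibit an explicit regular oriented $m$-Cayley digraph $\Cay(\mz_2^n, T_{i,j})$ and verify $\Aut=R(\mz_2^n)$. For $n\ge 1$ and $m$ large (e.g.\ $m\ge 3$ when $n\ge 2$, or $m\ge 7$ for $\mz_1$, $m\ne 2$ for $\mz_2$), the strategy mirrors Case~2 of Lemma~\ref{lem=ORR}: build a ``path-like'' chain of parts $G_0,\dots,G_{m-1}$ with $T_{i,i-1}=\{1\}$-type connections and asymmetric decorations ($T_{i,i}$ or extra $T_{i,i+1}$ singletons) arranged so that (a) counting arcs in $[\G^+(1_i)]$ distinguishes all the parts, forcing $\Aut$ to fix every $G_i$ setwise, and (b) the induced sub-digraph on a suitable pair $G_0\cup G_1$ (or the relevant $T_{0,1}=\{1\}$ rigidity plus an inductive ``spine'') forces the stabilizer of a vertex to be trivial. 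Here Notation~\ref{defc} and Remark~\ref{case1rmk} on oriented valency-two digraphs are the tools: for the abelian $2$-group $\mz_2^n$ there is no ORR to bootstrap from, so one instead uses a carefully chosen valency-two oriented piece whose $3$-cycle structure $\mathcal C(\Delta)$ is rigid, and propagates rigidity through the parts via Remark~\ref{case1rmk}(2). Since $n\le 4$, everything is a finite check (possibly computer-assisted).

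\textbf{The main obstacle.} The hard part is the small-$n$, small-$m$ \emph{non}-existence claims that are \emph{not} covered by Proposition~\ref{prop=ORR}, namely showing $\mz_2^n$ ($1\le n\le 4$) admits no O$2$SR and $\mz_1$ admits no O$m$SR for $3\le m\le 6$. Unlike the ORR case these are genuinely new: one must show that \emph{every} oriented $2$-Cayley digraph $\BiCay(\mz_2^n,R,L,S,T)$ with the right valency either has $\Aut$ strictly larger than $R(\mz_2^n)$ or fails to be regular. I would argue by a structural case analysis on the valency $d$ and on $|S|,|T|$: since $\mz_2^n$ is so symmetric (every element is an involution, so $R\cup L$ governs two Cayley-type sub-digraphs on $G_0,G_1$), any configuration that makes $[G_0]$ and $[G_1]$ individually rigid already forces them to be isomorphic elementary abelian $2$-group Cayley digraphs admitting extra automorphisms (by Babai's list $\mz_2^n$, $n\le 4$, has no DRR in the relevant range after removing the forced part), and the connecting sets $S,T$ cannot compensate in only $2$ parts. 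For $\mz_1$ with $3\le m\le 6$ the digraph is just an oriented graph on $m$ vertices and the claim is that no oriented digraph on $\le 6$ vertices that is regular has a trivial automorphism group while also arising with the semiregular-partition structure forced by $R(\mz_1)=1$ being \emph{all} of $\Aut$ — this is the statement that the smallest vertex-transitive-free-but-asymmetric oriented regular digraph has $\ge 7$ vertices, which is a known small-case fact (the smallest asymmetric tournament has $7$ vertices) and is verified directly. I expect most of the write-up weight to fall on the O$2$SR non-existence for $\mz_2^3$ and $\mz_2^4$, where the case analysis on $(d,|S|,|T|)$ is longest.
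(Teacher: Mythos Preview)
Your ``easy boundary'' paragraph is internally inconsistent and, as written, wrong. You argue that ``once $m$ is large enough relative to $|G|$, no such digraph can have automorphism group exactly $G$,'' but then conclude the exact opposite: that $\mz_1$ \emph{admits} an O$m$SR for $m\ge 7$ and fails only for $2\le m\le 6$. There is no pigeonhole obstruction killing large $m$; the whole point is that with more parts one has \emph{more} room to break symmetry, not less. The paper's (correct) obstructions go the other way: for small $m$ the digraph has too few vertices (e.g.\ valency $0$ or $1$ forces vertex-transitivity; fewer than $5$ vertices is hopeless), and the remaining small cases $(m,G)\in\{(5,\mz_1),(6,\mz_1),(2,\mz_2^2),(2,\mz_2^3),(2,\mz_2^4)\}$ are disposed of by an exhaustive computer search in {\sc Magma}, not by any counting estimate.

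That computer step is where your proposal has a genuine gap. Your plan for the O$2$SR non-existence over $\mz_2^n$ ($2\le n\le 4$) is a structural case analysis on $(d,|S|,|T|)$, appealing to the fact that $\mz_2^n$ has no DRR in this range so the ``inner'' Cayley digraphs $[G_0],[G_1]$ carry extra automorphisms. But extra automorphisms of $[G_0]$ and $[G_1]$ separately need not extend to $\BiCay(G,R,L,S,T)$, and conversely there can be automorphisms swapping $G_0$ and $G_1$ or mixing them that do not restrict to automorphisms of either; so the argument you sketch does not close. The paper makes no attempt at such an analysis and simply verifies these finitely many cases by machine. Similarly, for $\mz_1$ with $m\in\{5,6\}$ you invoke ``the smallest asymmetric tournament has $7$ vertices,'' but an O$m$SR of $\mz_1$ is any regular asymmetric oriented digraph on $m$ vertices, not necessarily a tournament, so the tournament fact alone is insufficient; again the paper just checks this by computer. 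On the constructive side your outline is closer to the paper's: explicit valency-two oriented $m$-Cayley digraphs for each $(n,m)$, verified by {\sc Magma} for $m\le 11$, and for $m\ge 12$ a uniform family built so that the $3$-cycle structure (via Remark~\ref{case1rmk}) pins down all vertices; the $\mz_2^n$ constructions for $n\ge 1$ are obtained from the $\mz_1$ one by relabelling a handful of connection sets with generators of $\mz_2^n$ and propagating rigidity layer by layer.
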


\begin{proof}
Let $G:=\mz_2^n~(0\leq n\leq 4)$. By Remark~\ref{rmkgdg} and Proposition~\ref{prop=ORR}, $G$ does not admit O$1$SRs if and only if $G=\mz_2^2,\mz_2^3$ or $\mz_2^4$. For the rest of the proof, we assume $m\geq 2$. We first show that
\begin{equation}\label{2-eleab}
\mbox{if~}(m,G)\in \{(2,\mz_1),(3,\mz_1),(4,\mz_1),(5,\mz_1),(6,\mz_1),(2,\mz_2),(2,\mz_2^2),
(2,\mz_2^3),(2,\mz_2^4) \} \mbox{ then $G$ does not admit O$m$SRs.}
\end{equation}
Since any vertex-transitive digraph cannot be O$m$SR ($m\geq 2$), digraphs with valency $0$, $1$ cannot be O$m$SR ($m\geq 2$). Moreover, digraphs of order less than $5$ cannot be O$m$SR. Hence $\mz_1$ does not admit O$m$SRs for $2\leq m\leq 4$, and $\mz_2$ does not admit O$2$SRs. With the aid of {\sc Magma} \cite{magma}, we can find that $G$ does not admit O$m$SRs for each $(m,G)\in \{(5,\mz_1),(6,\mz_1),(2,\mz_2^2), (2,\mz_2^3), (2,\mz_2^4)\}$. This shows statement (\ref{2-eleab}).\\

To complete the proof, we need to show that if $G$ satisfies $G=\mz_1$ with $m\geq 7$ or $G=\mz_2^n~(1\leq n\leq 4)$ with $m\geq 3$, then $G$ admits an O$m$SR. We divide the rest of the proof into five cases.\\

\f{\bf Case 1:} Show that $G=\mz_1$ admits an O$m$SR for any $m\geq 7$.\\
\noindent{\em Proof of Case 1. }For a given $m$, we will define a subset $T_{i,j}\subseteq G=\mz_1~(i,j\in \mz_m)$ and $\G_{m}:=\Cay(G,T_{i,j}:i,j \in \mz_m)$. Let $\mathcal{A}:=\Aut(\G_m)$, $\mathcal{C}:=\mathcal{C}(\G_m)$ and $\mathcal{C}_S:=\mathcal{C}_S(\G_m)$ for a subset $S\subseteq V(\G_m)$ (see Notation \ref{defc}).\\
For each $7\leq m\leq 11$, define a subset $T_{i,j}\subseteq G~(i,j\in \mz_m)$ as follows:
\[
\begin{array}{ll}
T_{0,3}=T_{1,4}=T_{2,1}=T_{m-1,2}=T_{j,j+2}=\{1\}~~& \mbox{ for } 3\leq j\leq m-2;\nonumber\\
T_{i,i+1}=\{1\}~~&\mbox{ for } i\in\mz_m;\nonumber\\
T_{i,j}=\emptyset~~&\mbox{ otherwise}.\nonumber
\end{array}
\]
Then $\G_m$ is an oriented $m$-Cayley digraph of $G$ with valency two. With the aid of {\sc Magma}~\cite{magma}, we find that $\G_m$ is an O$m$SR of $G$ for each $7\leq m\leq 11$.\\

On the other hand, consider Case 1 for $m\geq 12$. For each even integer $m\geq12$, define a subset $T_{i,j}\subseteq G~(i,j \in \mz_m)$ as follows:
\begin{eqnarray}
\{1\}&=&T_{0,1}=T_{1,2}=T_{2,3}=T_{3,4}=T_{4,5}=T_{5,6}=T_{6,0}~;\label{1st}\\
\{1\}&=&T_{9,10}=T_{10,12}=\cdots =T_{i,i+2}=\cdots=T_{m-4,m-2}=T_{m-2,m-1}\nonumber \\
&=&
T_{m-1,m-3}=\cdots=T_{i,i-2}=\cdots=T_{13,11}=T_{11,1}=T_{1,9}~; \label{2nd}\\
\{1\}&=&T_{0,4}=T_{4,2}=T_{2,6}=T_{6,7}=T_{7,8}=T_{8,3}=T_{3,5}=T_{5,7}=T_{7,m-2}=
T_{m-2,8}=T_{8,m-1}\nonumber\\
&=&T_{m-1,m-4}=T_{m-4,m-3}=\cdots=T_{i,i-3}=T_{i-3,i-2}=\cdots =
T_{13,10}=T_{10,11}=T_{11,9}=T_{9,0}~;\label{3rd}\\
\emptyset&=& T_{i,j}~ \mbox{ otherwise }.\nonumber
\end{eqnarray}

Then $\G_m$ is an oriented $m$-Cayley digraph of $G$ with valency two and $\G_m$ consists of three oriented cycles which correspond to (\ref{1st})-(\ref{3rd}), respectively (see the left one of {\sc Figure}~\ref{Fig2}).\\

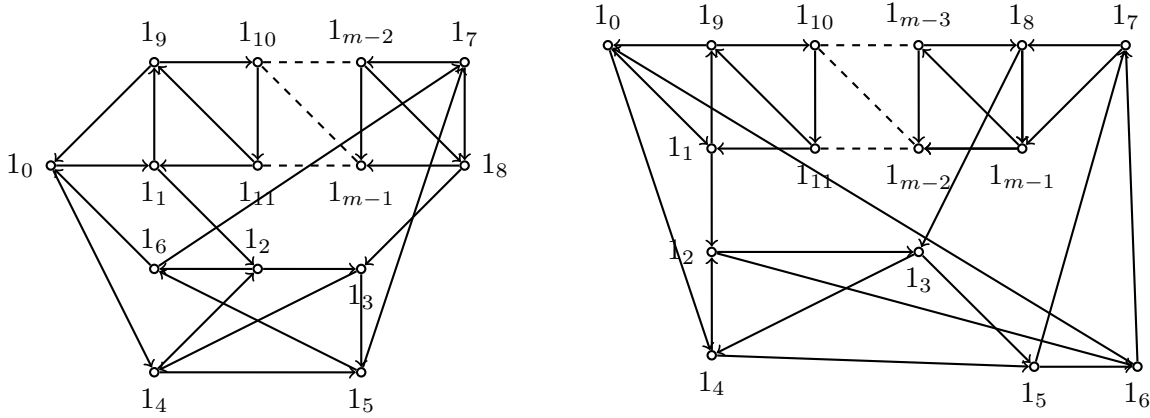
\begin{figure}[!ht]

\begin{tikzpicture}[node distance=1.1cm,thick,scale=0.7,every node/.style={transform shape},scale=1.6]
\node[circle](A0){};
\node[below=of A0, circle,draw, inner sep=1pt, label=left:{$1_0$}](A1){};
\node[right=of A1, circle,draw, inner sep=1pt, label=below:{$1_1$}](B1){};
\node[above=of B1, circle,draw, inner sep=1pt, label=above:{$1_9$}](B0){};
\node[below=of B1](B11){};

\node[right=of B1, circle,draw, inner sep=1pt, label=below:{$1_{11}$}](E1){};
\node[right=of B0, circle,draw, inner sep=1pt, label=above:{$1_{10}$}](E0){};

\node[right=of E1, circle,draw, inner sep=1pt, label=below:{$1_{m-1}$}](F1){};
\node[right=of E0, circle,draw, inner sep=1pt, label=above:{$1_{m-2}$}](F0){};

\node[right=of F1, circle,draw, inner sep=1pt, label=right:{$1_{8}$}](C1){};
\node[right=of F0, circle,draw, inner sep=1pt, label=above:{$1_{7}$}](C0){};
\node[below=of E1, circle,draw, inner sep=1pt, label=above:{$1_2$}](B2){};

\node[below=of C1](C11){};
\node[right=of B2, circle,draw, inner sep=1pt, label=below:{$1_3$}](C2){};
\node[below=of C2, circle,draw, inner sep=1pt, label=below:{$1_5$}](D2){};
\node[below=of B1, circle,draw, inner sep=1pt, label=above:{$1_6$}](D0){};
\node[below=of D0, circle,draw, inner sep=1pt, label=below:{$1_4$}](B3){};

\draw[->] (A1) to (B1);
\draw[->] (A1) to (B3);
\draw[->] (B1) to (B2);
\draw[->] (B1) to (B0);

\draw[->] (B2) to (C2);
\draw[->] (B2) to (D0);
\draw[->] (C2) to (B3);
\draw[->] (C2) to (D2);

\draw[->] (B3) to (D2);
\draw[->] (B3) to (B2);
\draw[->] (D2) to (D0);
\draw[->] (D2) to (C0);

\draw[->] (D0) to (C0);
\draw[->] (D0) to (A1);
\draw[->] (C0) to (C1);
\draw[->] (C0) to (F0);

\draw[->] (C1) to (C2);
\draw[->] (C1) to (F1);
\draw[->] (B0) to (E0);
\draw[->] (B0) to (A1);

\draw[->] (E1) to (B0);
\draw[->] (E1) to (B1);
\draw[->] (E0) to (E1);
\draw[->] (F0) to (F1);
\draw[->] (F0) to (C1);

\draw[dashed] (E0) to (F0);
\draw[dashed] (E1) to (F1);
\draw[dashed] (F1) to (E0);
\node[above=of D0 ](D00){};
\node[above=of D00 ](D01){};
\node[right=of D01](D02){};
\node[right=of D02](D04){};
\node[right=of D04](D03){};
\node[right=of D03, circle,draw, inner sep=1pt, label=above:{$1_0$}](A10){};
\node[right=of A10, circle,draw, inner sep=1pt, label=above:{$1_9$}](B10){};
\node[below=of B10, circle,draw, inner sep=1pt, label=left:{$1_1$}](B11){};
\node[below=of B11](B111){};
\node[below=of B11, circle,draw, inner sep=1pt, label=left:{$1_2$}](B12){};
\node[below=of B12, circle,draw, inner sep=1pt, label=below:{$1_4$}](B13){};

\node[right=of B10, circle,draw, inner sep=1pt, label=above:{$1_{10}$}](E10){};
\node[below=of E10, circle,draw, inner sep=1pt, label=below:{$1_{11}$}](E11){};
\node[right=of E10, circle,draw, inner sep=1pt, label=above:{$1_{m-3}$}](F10){};
\node[right=of E11, circle,draw, inner sep=1pt, label=below:{$1_{m-2}$}](F11){};

\node[right=of F10, circle,draw, inner sep=1pt, label=above:{$1_8$}](C10){};
\node[right=of C10, circle,draw, inner sep=1pt, label=above:{$1_{7}$}](D10){};
\node[below=of C10, circle,draw, inner sep=1pt, label=below:{$1_{m-1}$}](C11){};
\node[below=of C11](B122){};
\node[right=of B122](B123){};
\node[below=of B123, circle,draw, inner sep=1pt, label=below:{$1_6$}](D11){};
\node[left=of D11, circle,draw, inner sep=1pt, label=below:{$1_5$}](C13){};
\node[below=of F11, circle,draw, inner sep=1pt, label=below:{$1_3$}](D12){};
\draw[->] (A10) to (B11);
\draw[->] (A10) to (B13);
\draw[->] (B11) to (B12);
\draw[->] (B11) to (B10);
\draw[->] (B12) to (D12);
\draw[->] (B12) to (D11);
\draw[->] (D12) to (B13);
\draw[->] (D12) to (C13);
\draw[->] (B13) to (B12);
\draw[->] (B13) to (C13);
\draw[->] (C13) to (D11);
\draw[->] (C13) to (D10);
\draw[->] (D11) to (D10);
\draw[->] (D11) to (A10);
\draw[->] (D10) to (C10);
\draw[->] (D10) to (C11);
\draw[->] (C10) to (C11);
\draw[->] (C10) to (D12);
\draw[->] (B10) to (E10);
\draw[->] (B10) to (A10);
\draw[->] (C11) to (F11);

\draw[->] (E11) to (B11);
\draw[->] (E11) to (B10);
\draw[->] (E10) to (E11);

\draw[->] (F10) to (C10);
\draw[->] (C11) to (F10);
\draw[->] (F10) to (F11);
\draw[->] (C10) to (C11);
\draw[->] (C11) to (F11);

\draw[dashed] (E10) to (F10);
\draw[dashed] (E11) to (F11);
\draw[dashed] (E10) to (F11);
\end{tikzpicture}
\caption{O$m$SR of $\mathbb{Z}_1$ for $m\geq12$}\label{Fig2}
\end{figure}
We first show $\mathcal{A}_{1_0}=1$ by using {\sc Figure} \ref{Fig2} and Remark \ref{case1rmk}. By $|\mathcal{C}_{1_0}|=1$ and $\mathcal{C}_{1_0}=[\{1_0,1_1,1_9\}]$, $\mathcal{A}_{1_0}$ fixes cycle $\mathcal{C}_{1_0}$.\\
By Remark~\ref{case1rmk} with $(\sigma, s, S)=(\mathcal{A}_{1_0}, 1_9, \{1_9, 1_{10}, 1_{12}, \cdots,  1_{m-1}\}) $, $\mathcal{A}_{1_0}$ fixes $N(\mathcal{C}_{\{1_9, 1_{10}, 1_{12}, \cdots,  1_{m-1}\}})$ pointwise. On the other hand, we find that $|\mathcal{C}_{1_4}|=1$ and two cycles $\mathcal{C}_{1_0}$ and $\mathcal{C}_{1_4}=[\{1_4,1_2,1_3\}]$ are adjacent. This implies that $\mathcal{A}_{1_0}$ fixes $\mathcal{C}_{1_4}$ pointwise, and also $N(\mathcal{C}_{\{1_9, 1_{10}, 1_{12}, \cdots,  1_{m-1}\}}\cup \mathcal{C}_{1_4})$. Now  $\mathcal{A}_{1_0}=1$ follows as $N(\mathcal{C}_{\{1_9, 1_{10}, 1_{12}, \cdots,  1_{m-1}\}}\cup \mathcal{C}_{1_4})=V(\G_m)$. By the left one of {\sc Figure}~\ref{Fig2}, it is easy to find that $\{1_5,1_6,1_7,1_8\}=V(\G_m)\setminus V(\mathcal{C})$ and $1_5\in N(\mathcal{C})$ both hold. This implies that $\mathcal{A}$ fixes $1_5$, and $\mathcal{A}$ fixes directed path $(1_5,1_6,1_7)$ pointwise because the induced digraph $[\{1_5,1_6,1_7,1_8\}]$ is the union of directed path $(1_5,1_6,1_7,1_8)$ and arc $(1_5,1_7)$. Since $1_0$ is a neighbor of $1_6$, $\mathcal{A}$ fixes $1_0$ by Remark \ref{case1rmk}. As $\mathcal{A}_{1_0}=1$, $\mathcal{A}$ fixes each vertex of $\G_m$. Now Case 1 holds for even integer $m\geq 12$.\\

Finally, we consider Case 1 for odd integer $m>12$. Define a subset $T_{i,j}\subseteq G~(i,j \in \mz_m)$ as follows:
\begin{eqnarray}
\{1\}&=& T_{0,1}=T_{1,2}=T_{2,3}=T_{3,4}=T_{4,5}=T_{5,6}=T_{6,0}~;\label{1stt}\\
\{1\}&=&T_{9,10}=T_{10,12}=\cdots=T_{i,i+2}=\cdots=T_{m-5,m-3}=T_{m-3,8}=
T_{8,m-1}=T_{m-1,m-2}\nonumber \\
&=&T_{m-2,m-4}=\cdots=T_{i,i-2}=\cdots=T_{13,11}=T_{11,1}=T_{1,9}~;\label{2ndd}\\
\{1\}&=&T_{0,4}=T_{4,2}=T_{2,6}=T_{6,7}=T_{7,8}=T_{8,3}=T_{3,5}=T_{5,7}=T_{7,m-1}=
T_{m-1,m-3}=T_{m-3,m-2}=T_{m-2,m-5}\nonumber \\ &=&\cdots =T_{i,i+1}=T_{i+1,i-2}=\cdots =
T_{12,13}=T_{13,10}=T_{10,11}=T_{11,9}=T_{9,0}~;\label{3rdd}\\
\emptyset &=&T_{i,j}~\mbox{ otherwise~}.\nonumber
\end{eqnarray}
Then $\G_m$ is an oriented $m$-Cayley digraph of $G$ with valency two, and $\G_m$ consists of three oriented cycles which correspond to (\ref{1stt})-(\ref{3rdd}), respectively (see the right one of {\sc Figure}~\ref{Fig2}). With $S:=\{1_9, 1_{10}, 1_{12}\cdots 1_{m-3}\}$, we can obtain $\mathcal{A}_{1_0}=1$ in the same way as Case 1 for even integer $m\geq 12$. By the right one of {\sc Figure}~\ref{Fig2}, it is easy to find that $\{1_5,1_6,1_7\}=V(\G_m)\setminus V(\mathcal{C})$ and $1_5\in N(\mathcal{C})$ both hold. This implies that $\mathcal{A}$ fixes $1_5$, and $\mathcal{A}$ fixes directed path $(1_5,1_6,1_7)$ pointwise because induced digraph $[\{1_5,1_6,1_7\}]$ is the union of directed path $(1_5,1_6,1_7)$ and arc $(1_5,1_7)$. Since $1_0$ is a neighbor of $1_6$, $\mathcal{A}$ fixes $1_0$ by Remark \ref{case1rmk}. As $\mathcal{A}_{1_0}=1$, $\mathcal{A}$ fixes each vertex of $\G_m$. Now Case 1 holds for odd integer $m> 12$. This completes the proof of Case 1.\qed \\

For the remaining cases we need to use the following statement (\ref{sigmaclaim}) for new graph $\Delta$ from $\G_m$, where $\G_m$ is the graph defined in Case 1. For a positive integer $m\geq 12$ and a subset $\mathbb{E}\subseteq \{(1_6$, $1_0)$, $(1_1,1_2)$, $(1_8,1_3)$, $(1_4,1_5)\}$, let $\Delta$ be a connected oriented digraph with valency two containing $\G_m-\mathbb{E}$ as a subgraph, where $\G_m-\mathbb{E}$ is the subgraph of $\G_m$ by deleting all arcs in $\mathbb{E}$. Note that the graph $\G_m-\mathbb{E}$ is connected and it contains every oriented $3$-cycles in $\mathcal{C}(\G_m)\cup \mathcal{C}_{1_4}(\G_m)$ and arc $(1_0,1_4)$, where $\mathcal{C}_{1_4}(\G_m)=[ \{1_4,1_2,1_3\}]$. For each $e\in \mathbb{E}$, we denote by $v(e)$ the set of two vertices incident to $e$ and let $\Aut(\Delta)_{(v(e))}$ be the subgroup of $\Aut(\Delta)$ fixing $v(e)$ pointwise. We now show that the following holds.
\begin{equation}\label{sigmaclaim}
\mbox{Each~group~in~}\{\Aut(\Delta)_{1_0}, \Aut(\Delta)_{1_5}, \Aut(\Delta)_{(v(e))}\ |\ e\in \mathbb{E}\} \mbox{~fixes~each vertex~of~}\G_m.
\end{equation}
First, we can show that $\Aut(\Delta)_{1_0}$ fixes each vertex of $\G_m$ in the same way with Case 1. Let $e\in \mathbb{E}$. If $e=(1_6,1_0)$ then $\Aut(\Delta)_{(v(e))}$ fixes $1_0$. If $e\ne(1_6,1_0)$, then $e$ and $\mathcal{C}_{1_4}(\G_m)$ has a common vertex. By Remark \ref{case1rmk}, we obtain that $\Aut(\Delta)_{(v(e))}$ fixes $\mathcal{C}_{1_4}(\G_m)=[\{1_4,1_2,1_3\}]$ pointwise, and also fixes each neighbors of $1_4$. Since $1_0$ is a neighbor of $1_4$, $\Aut(\Delta)_{(v(e))}$ fixes $1_0$ and hence $\Aut(\Delta)_{(v(e))}$ fixes each vertex of $\G_m$. Now consider $\Aut(\Delta)_{1_5}$. Since two out-neighbors of $1_5$ in $\Delta$ are $1_6$ and $1_7$, and $(1_6,1_7)$ is an arc, we obtain that $\Aut(\Delta)_{1_5}$ fixes cycle $[\{1_5,1_6,1_7\}]$ pointwise. As $1_2$ is adjacent to $1_6$, $\Aut(\Delta)_{1_5}$ fixes $1_2$ by Remark \ref{case1rmk}. Thus, $\Aut(\Delta)_{1_5}$ fixes cycle
$\mathcal{C}_{1_4}(\G_m)=[\{1_4,1_2,1_3\}]$ pointwise, and thus it fixes each vertex of $\G_m$. Now (\ref{sigmaclaim}) follows and we are ready to use it.\\

For the remaining cases Case 2--Case 5, we will define a subset $T_{i,j}\subseteq G~(i,j\in \mz_m)$ for each $m$ and $G$, and let $\Sigma_{m}:=\Cay(G,T_{i,j}:i,j \in \mz_m)$ and $\mathcal{A}:=\Aut(\Sigma_m)$. Then we can check that each $\Sigma_m$ is an oriented $m$-Cayley digraph of $G$ with valency two.\\

\f{\bf Case 2:} Show that $G=\langle x\rangle= \mz_2$ admits an O$m$SR for any $m\geq 3$.\\
\noindent{\em Proof of Case 2. }For each $3\leq m\leq 11$, define a subset $T_{i,j}\subseteq G~(i,j \in \mz_m)$ as follows. Let $T_{i,j}=\emptyset ~(i,j \in \mz_m)$ except
\[\begin{array}{ll}
\mbox{for~}m=3 &T_{0,1}=T_{0,2}=T_{2,0}=\{1\},~T_{1,0}=T_{1,2}=T_{2,1}=\{x\}~;\\
\mbox{for~}m=4 & T_{0,1}=T_{0,2}=T_{3,0}=T_{3,1}=\{1\},~T_{1,0}=T_{1,2}=\{x\},~T_{2,3}=\{1,x\}~; \\
\mbox{for~}m=5 &T_{0,1}=T_{0,3}=T_{2,0}=T_{2,4}=T_{3,1}=T_{3,4}=T_{4,3}=T_{4,2}=\{1\},~T_{1,0}=T_{1,2}=\{x\}~;\\
\mbox{for~}m=6 &T_{0,1}=T_{0,3}=T_{2,0}=T_{2,5}=T_{3,1}=T_{3,4}=T_{4,3}=T_{5,4}=\{1\},~T_{1,0}=T_{1,2}=T_{4,5}=T_{5,2}=\{x\}~;\\
\mbox{for~}7\leq m\leq11&T_{0,1}=T_{0,3}=T_{2,0}=T_{2,5}=T_{3,1}=T_{3,4}=T_{4,3}=T_{5,6}=T_{i,i+1}=T_{m-1,4}=\{1\}, \\ &T_{1,0}=T_{1,2}=T_{4,m-1}=T_{5,2}=T_{i,i-1}=T_{m-1,m-2}=\{x\}~~(6\leq i\leq m-2).
\end{array}\]
With the aid of {\sc Magma}~\cite{magma}, we find that $\Sigma_m$ is an O$m$SR of $G$ for each $3\leq m\leq 11$.\\

For $m\geq 12$, take $T_{i,j}~(i,j\in \mz_m)$ the same as Case~1 except $T_{6,0}=\{x\}$. Note that we denote by $\G_m$ ($\Sigma_m$, respectively) the $m$-Cayley digraph defined in Case~1 (Case 2-Case 5, respectively). Clearly, $\Sigma_m$ has two layers: vertex sets $\{1_i\ |\ i\in \mz_m\}$ and $\{x_i\ |\ i\in \mz_m\}$.  They are called {\em $1$-layer} and {\em $x$-layer of $\Sigma_m$}, respectively (this notation {\em $i$-layer} will be used in Case 2-Case 5). For each $g\in \{1,x\}$, the induced subgraph of $g$-layer is isomorphic to the subgraph $\G_m-\{(1_6,1_0)\}$ of $\G_m$. By (\ref{sigmaclaim}), $\mathcal{A}_{1_0}$ fixes $1$-layer pointwise. Since $1$-layer has only two neighbors in $x$-layer (say, $x_0$ and $x_6$), $\mathcal{A}_{1_0}$ fixes $x_0$ and $x_6$, respectively. As $\mathcal{A}_{(\{x_0,x_6\})}$ fixes each vertex of $x$-layer, $\mathcal{A}_{1_0}$ fixes every vertex of $\Sigma_m$ (i.e., $\mathcal{A}_{1_0}=1$). We also obtain $\mathcal{A}_{1_5}=\mathcal{A}_{1_0}=1$ by (\ref{sigmaclaim}). In view of {\sc Figure}~\ref{Fig2}, we can check that for even $m\geq 12$, the vertices not on any oriented $3$-cycle of $\Sigma_m$ are the vertices in $G_5\cup G_6\cup G_7\cup G_8$, and only $G_5$ has two in-neighbors on an oriented $3$-cycle; for odd  $m>12$, the vertices not on any oriented $3$-cycle of $\Sigma_m$ are the vertices in $G_5\cup G_6\cup G_7$, and only $G_5$ has two in-neighbors on an oriented $3$-cycle. Thus, $\mathcal{A}$ fixes $G_5$ setwise. Since $R(G)$ is transitive on $G_5$, we have $\mathcal{A}=R(G)\mathcal{A}_{1_5}=R(G)$. Hence $\Sigma_m$ is an O$m$SR of $G$. This completes the proof of Case 2.\qed \\

\f{\bf Case 3:} Show that $G=\langle x,y\rangle= \mz_2^2$ admits an O$m$SR for any $m\geq 3$.\\
\noindent{\em Proof of Case 3. }For each $3\leq m\leq 11$, define a subset $T_{i,j}\subseteq G~(i,j \in \mz_m)$ as follows. Let $T_{i,j}=\emptyset ~(i,j \in \mz_m)$ except
\[\begin{array}{ll}
\mbox{for~}m=3 &T_{0,1}=T_{1,2}=\{1\},~T_{2,1}=\{x\},~T_{1,0}=T_{2,0}=\{y\},~T_{0,2}=\{xy\}~;\\
\mbox{for~}m=4 &T_{0,1}=T_{1,3}=T_{3,2}=\{1\},~T_{2,3}=T_{3,1}=\{x\},~T_{1,0}=T_{2,0}=\{y\},~T_{0,2}=\{xy\}~;\\
\mbox{for~}5\leq m\leq 11& T_{0,1}=T_{1,3}=T_{3,4}=T_{i,i+1}=T_{m-1,2}=\{1\},~T_{1,0}=T_{2,0}=\{y\},~T_{0,2}=\{xy\},\\ &T_{2,m-1}=T_{3,1}=T_{i,i-1}=T_{m-1,m-2}=\{x\}~(4\leq i\leq m-2).
\end{array}\]
With the aid of {\sc Magma}~\cite{magma}, we find that $\Sigma_m$ is an O$m$SR of $G$ for each $3\leq m\leq 11$.\\
For $m\geq 12$,  define a subset $T_{i,j}\subseteq G~(i,j \in \mz_m)$ the same as Case~1 except
$$T_{6,0}=\{x\} \mbox{~and~} T_{1,2}=\{y\}.$$

Then $\Sigma_m$ has four layers. The induced subgraph of each layer is isomorphic to the induced subgraph of $1$-layer, which is exactly the subgraph $\G_m-\{(1_6,1_0),(1_1,1_2)\}$ of $\G_m$. By (\ref{sigmaclaim}), $\mathcal{A}_{1_0}$ fixes $1$-layer pointwise, and also it fixes each vertex of $\{x_0,x_6,y_1,y_2\}$. Again by (\ref{sigmaclaim}), $\mathcal{A}_{(\{x_0,y_6\})}$ fixes $x$-layer pointwise; $\mathcal{A}_{(\{ y_1,y_2\})}$ fixes $y$-layer pointwise; $\mathcal{A}_{(\{ y_1,y_2\})}$ fixes $(xy)_0$ and $(xy)_6$ because they are adjacent to $y_6$ and $y_0$, respectively. Therefore $\mathcal{A}_{(\{(xy)_0,(xy)_6\})}$ fixes $xy$-layer pointwise, and hence $\mathcal{A}_{1_0}$ fixes every vertex of $\Sigma_m$ (i.e., $\mathcal{A}_{1_0}=1$). Thus, $\mathcal{A}_{1_5}=\mathcal{A}_{1_0}=1$. In the same way as Case 2, we can show that $\mathcal{A}$ fixes $G_5$ setwise, and $\Sigma_m$ is an O$m$SR of $G$. This completes the proof of Case 3.\qed\\

\f{\bf Case 4:} Show that $G=\langle x,y,z \rangle= \mz_2^3$ admits an O$m$SR for any $m\geq 3$.\\
\noindent{\em Proof of Case 4. }For each $3\leq m\leq 11$, define a subset $T_{i,j}\subseteq G~(i,j \in \mz_m)$ as follows. Let $T_{i,j}=\emptyset ~(i,j \in \mz_m)$ except
\[\begin{array}{ll}
\mbox{for~}m=3 &T_{0,1}=T_{1,2}=\{1\},~T_{2,1}=\{x\},~T_{2,0}=\{y\},~T_{1,0}=T_{0,2}=\{z\}~;\\
\mbox{for~}m=4 &T_{0,1}=T_{1,3}=T_{3,2}=\{1\},\,\,\,\,\,\,T_{2,3}=T_{3,1}=\{x\},\,\,\,\,\,\, T_{2,0}=\{y\},~T_{0,2}=\{z\}~;\\
\mbox{for~}5\leq m\leq 11 &T_{0,1}=T_{1,3}=T_{3,4}=T_{i,i+1}=T_{m-1,2}=\{1\},~T_{2,0}=\{y\},~T_{1,0}=T_{0,2}=\{z\},\\ &T_{2,m-1}=T_{3,1}=T_{i,i-1}=T_{m-1,m-2}=\{x\}~(4\leq i\leq m-2).
\end{array}\]
With the aid of {\sc Magma}~\cite{magma}, we find that $\Sigma_m$ is an O$m$SR of $G$ for each $3\leq m\leq 11$.\\
For $m\geq 12$, define a subset $T_{i,j}\subseteq G~(i,j \in \mz_m)$ the same as Case~1 except
$$T_{6,0}=\{x\},~T_{1,2}=\{y\},~T_{8,3}=\{z\}.$$
In the same way as Case 3, we can show that $\Sigma_{m}$ is an O$m$SR of $G$. This completes the proof of Case 4. \qed \\

\f{\bf Case 5:} Show that $G=\langle x,y,z,w\rangle= \mz_2^4$ admits an O$m$SR for any $m\geq 3$.\\
\noindent{\em Proof of Case 5. }For each $3\leq m\leq 11$, define a subset $T_{i,j}\subseteq G~(i,j \in \mz_m)$ as follows. Let $T_{i,j}=\emptyset ~(i,j \in \mz_m)$ except
\[\begin{array}{ll}
\mbox{for~}m=3~&T_{0,1}=\{1,y,xy\},~T_{1,2}=\{1,z,w\},~T_{2,0}=\{y,w,xw\}~;\\
\mbox{for~}4\leq m\leq 11~&T_{0,1}=\{1,y,xy\},~T_{1,2}=\{1,z,w\},~T_{m-1,0}=\{y,w,xw\},~T_{i,i+1}=\{x,y,w\} ~(2\leq i\leq m-2).
\end{array}\]
With the aid of {\sc Magma}~\cite{magma}, we find that $\Sigma_m$ is an O$m$SR of $G$ for each $3\leq m\leq 11$. \\
For $m\geq 12$, define a subset $T_{i,j}\subseteq G~(i,j \in \mz_m)$ the same as Case~1 except
$$T_{6,0}=\{x\},~T_{1,2}=\{y\},~T_{8,3}=\{z\},~T_{4,5}=\{w\}.$$
In the same way as Case 3, we can show that $\Sigma_{m}$ is an O$m$SR of $G$. This completes the proof of Case 5. Now Lemma \ref{lem=elementary abelian 2-group order le 16} follows.
\end{proof}

\begin{rem}
The oriented $m$-Cayley digraphs constructed in Lemma~\ref{lem=elementary abelian 2-group order le 16} have valency two. In particular, $\mz_1$ admits an O$m$SR with valency two if and only if $m\geq 7$, and $\mz_2^n$ ($1\leq n\leq 4$) admits an O$m$SR with valency two if and only if $m\geq 3$.
\end{rem}

\begin{lem}\label{lem=elementary abelian 2-group}
For any integers $n\geq 5$ and $m\geq 2$, $\mz_2^n$ admits an O$m$SR.
\end{lem}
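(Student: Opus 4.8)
The plan is to embed $\mz_2^n$, for $n\geq5$, into a suitable oriented $m$-partite digraph built from a graphical regular representation of $\mz_2^n$. The key input is that $\mz_2^n$ admits a GRR when $n\geq5$: by Babai's theorem \cite{Babai} it admits a DRR $\Cay(\mz_2^n,R)$, and since every element of $\mz_2^n$ is an involution we have $R=R^{-1}$, so this DRR is an undirected Cayley \emph{graph} with $\Aut=R(\mz_2^n)$; in particular $\langle R\rangle=\mz_2^n$. Passing to the complement of $R$ in $\mz_2^n\setminus\{1\}$ if needed (the complement of a GRR is a GRR, and the two valencies have opposite parity), one may take $|R|=2\ell$ even; for $n\geq5$ one can moreover ensure $\ell\geq2$. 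The second, structural, remark is that, inversion being trivial in $\mz_2^n$, an $m$-Cayley digraph $\Cay(\mz_2^n,T_{i,j}:i,j\in\mz_m)$ is oriented precisely when $T_{i,i}=\emptyset$ for all $i$ and $T_{i,j}\cap T_{j,i}=\emptyset$ for $i\neq j$; so every oriented $m$-Cayley digraph of $\mz_2^n$ is $m$-partite with parts the orbits $G_i$ of $R(\mz_2^n)$, and regularity forces each part to have the common valency $\sum_{j\neq i}|T_{i,j}|$.

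For $m=2$ I would split $R=S\sqcup T$ with $|S|=|T|=\ell$, choosing the splitting (which exists for $n\geq5$) so that $T$ is not of the form $\theta(S)c$ for any $\theta\in\Aut(\mz_2^n)$ and $c\in\mz_2^n$. Put $\Gamma_2:=\BiCay(\mz_2^n,\emptyset,\emptyset,S,T)$; this is oriented (empty diagonal blocks and $S\cap T=\emptyset$) and regular of valency $\ell$. To show $\Aut(\Gamma_2)=R(\mz_2^n)$: by Proposition \ref{prop=Haar}, a side-preserving automorphism in standard form lies in $\Aut(\Cay(\mz_2^n,S))\cap\Aut(\Cay(\mz_2^n,T))\subseteq\Aut(\Cay(\mz_2^n,S\cup T))=\Aut(\Cay(\mz_2^n,R))=R(\mz_2^n)$; one then checks, using the arc structure of $\Gamma_2$ in the manner of \cite{DFS2,DFS3}, that every side-preserving automorphism is in fact standard, so the group $K$ of side-preserving automorphisms is exactly $R(\mz_2^n)$; finally, $K$ has index at most $2$ in $\Aut(\Gamma_2)$, and if it had index $2$ then a part-swap $\phi\in\Aut(\Gamma_2)$ would normalise $R(\mz_2^n)$, hence be semi-affine, hence force $T=\theta(S)c$, contrary to the choice of splitting. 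Thus $\Gamma_2$ is an O$2$SR of $\mz_2^n$.

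For $m\geq3$ I would mimic Case 2 of the proof of Lemma \ref{lem=ORR}, the O$2$SR $\Gamma_2$ taking the place of the ORR used there (this is necessary because the diagonal blocks must now all be empty). Arrange the $m$ parts $G_0,\dots,G_{m-1}$ cyclically, set $T_{i,i+1}=S$ and $T_{i+1,i}=T$ on each consecutive pair — so that every $[G_i\cup G_{i+1}]$ is a copy of $\Gamma_2$ and every part has the constant valency $2\ell$ — and then perturb the scheme on a bounded number of consecutive pairs, replacing $\Gamma_2$ by a non-isomorphic O$2$SR of $\mz_2^n$ on one distinguished pair (and possibly making one further valency-preserving change nearby) so as to destroy the cyclic and reflective symmetries. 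As in Lemma \ref{lem=ORR}, comparing the arc-counts $|A([\Gamma_m^+(1_i)])|$ of the distinct out-neighbourhoods forces $\Aut(\Gamma_m)$ to fix each $G_i$ setwise; then $\Aut(\Gamma_m)_{1_0}$ fixes the distinguished pair pointwise, since the induced subdigraph there is an O$2$SR, and this is propagated around the cycle through the size-matched connecting blocks, giving $\Aut(\Gamma_m)_{1_0}=1$. Hence $\Gamma_m$ is an O$m$SR of $\mz_2^n$.

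The main obstacle is the automorphism bookkeeping, above all for $m=2$: ruling out \emph{all} extra automorphisms of $\Gamma_2$ — not only the standard side-preserving ones covered by Proposition \ref{prop=Haar}, but also the non-standard side-preserving ones and the part-swaps — is precisely what dictates the careful choice of the splitting $R=S\sqcup T$; and, in the same spirit, one must verify that for \emph{every} $n\geq5$ there exists a GRR of $\mz_2^n$ admitting such a splitting (and enough non-isomorphic O$2$SRs for the $m\geq3$ construction). This is exactly where the hypothesis $n\geq5$, rather than $n\geq2$, is used — the cases $n\leq4$ are too small and are treated separately in Lemma \ref{lem=elementary abelian 2-group order le 16}. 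The remaining work for $m\geq3$ is the same type of arc-counting already carried out in Lemma \ref{lem=ORR}, now simplified by the empty diagonal blocks.
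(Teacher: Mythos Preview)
Your overall strategy---build an O$2$SR from a $\BiCay$ with empty diagonal blocks, leaning on a GRR of $\mz_2^n$, then extend cyclically for $m\ge3$---is the same skeleton the paper uses. But the execution has a genuine gap at the $m=2$ step, and the paper's way around it is instructive.

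The problem is the sentence ``one then checks \dots\ that every side-preserving automorphism is in fact standard''. Proposition~\ref{prop=Haar} only controls automorphisms of the diagonal form $g_i\mapsto g^{\sigma}_i$; it says nothing about a side-preserving automorphism that acts by \emph{different} permutations on $G_0$ and $G_1$. For an \emph{arbitrary} splitting $R=S\sqcup T$ of an arbitrary GRR connection set, there is no reason such non-diagonal automorphisms are absent, and you give no mechanism to rule them out. The paper does not attempt this in the abstract. Instead it takes very specific sets: $T$ is Imrich's explicit GRR connection set for $\mz_2^n$, and $S=\{1,x_1,\dots,x_n\}$ (note $1\in S$, so $S,T$ are \emph{not} halves of a GRR set). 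With these explicit choices one can compute common out-neighbours and show that $1_1$ is the unique vertex of $\Gamma_2^+(1_0)$ with no common out-neighbour with any other vertex there; this forces $\mathcal{A}_{1_0}\subseteq\mathcal{A}_{1_1}$, hence equality, hence the block system $\{\{g_0,g_1\}\}$, and \emph{only then} does Proposition~\ref{prop=Haar} apply to every automorphism. Your proposal skips exactly this step and replaces it with a citation-wave; without it the argument does not close. The existence, for every $n\ge5$, of a splitting with all the properties you list (including ``$T\ne\theta(S)c$'') is likewise asserted rather than shown.

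For $m\ge3$ the paper also diverges from your plan: rather than two sets $S,T$ placed cyclically plus an unspecified ``perturbation'', it uses \emph{three} sets $S$, $R=xS$, $T$ with $T$ appearing on a single edge $T_{1,0}$, so the asymmetry is built in; and the invariant used to separate the orbits is the size $|\Gamma_m^+(\Gamma_m^+(1_i))|$ of the \emph{second} out-neighbourhood, controlled via the explicit product-set inequalities $|ST|>|SR|=|S^2|=|R^2|$ and $ST\nsubseteq SR$, not the arc-counts of Lemma~\ref{lem=ORR}. Your ``perturb on a bounded number of pairs'' and ``non-isomorphic O$2$SR'' are too vague to be checked, and the arc-count method you invoke would require the same kind of explicit computation you have not done for $m=2$.
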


\begin{proof}
For a given integer $n\geq 5$, let $G:=\mz_2^n=\langle x_1,x_2,\cdots,x_n\rangle$, $x:=x_1x_2\cdots x_n$ and $\overline{x_i}:=x x_i$ $(1\leq i\leq n)$. Take subsets $S,R,T\subseteq G$ as follows:
$$S=\{1,\,x_i~|~1\leq i\leq n\},~R=\{x,\,\overline{x_i}~|~1\leq i\leq n\},~T=\{x_1x_2x_{n-2}x_{n-1},\,x_1x_2x_{n-1}x_n,\,x_ix_{i+1}~|~1\leq i\leq n-1\}.$$
It is easy to see that $R=xS$ and $|R^2|=|S^2|=|\{1,\,x_i,\,x_ix_j\ |\ 1\leq i,j\leq n\}|=1+n+\frac{n(n-1)}{2}$ hold. Since set $\{x_ix_{i+1}x_{i+2}~|~1\leq i\leq n-2\}$ appears twice in $ST$ where
$$ST=T\cup \{x_i,\,x_kx_jx_{j+1}~|~1\leq i,j,k \leq n,\, j\ne n,\,k\neq j,j+1\}\cup \{x_1x_2x_ix_{n-2}x_{n-1},\,x_1x_2x_jx_{n-1}x_n~|~3\leq i,j\leq n-2, i\ne  n-2\},$$ we obtain
$$|ST|=n+1+n+(n-1)(n-3)+(n-5)+(n-4)-(n-2)=n^2-n-3.$$ By $x_1x_2\in ST$, $x_1x_2\not\in SR$, $\overline{x_1}\in SR$ and $\overline{x_1}\not\in ST$, we have the following :
\begin{eqnarray}\label{abeEq2}
|R^2|=|S^2|=|SR|,~|RT|=|ST|>|R^2|,~ST\nsubseteq SR,~SR\nsubseteq ST.
\end{eqnarray}
We divide the proof into two cases: $m=2$ (Case 1) and $m\geq 3$ (Case 2).\\

\f{\bf Case 1: }Show that $G$ admits an O$2$SR.\\
\noindent{\em Proof of Case 1. }Let $\G_2:=\BiCay(G,\emptyset,\emptyset,S,T)$ and $\mathcal{A}:=\Aut(\G_2)$. Then $\G_2$ is an oriented $2$-Cayley digraph of $G$ with valency $n+1$. If $n=5$, then we find that $\G_2$ is an O$2$SR of $G$ by using {\sc Magma}~\cite{magma}. Let $n\geq 6$. Note that
\[\G_2^+(1_0)=S_1=\{1_1,\,(x_i)_1~|~1\leq i\leq n\},~x_iT=\{x_ix_jx_{j+1},\,x_ix_1x_2x_{n-2}x_{n-1},\,x_ix_1x_2x_{n-1}x_n~|~1\leq j<n\}~(1\leq i\leq n)\]
all hold. Since $R(G)\in \mathcal{A}$ and $G$ is abelian, we have
$$\G_2^+(1_1)=T_0~\mbox{ and } \G_2^+((x_i)_1)=(x_iT)_0~~(1\leq i\leq n).$$
It follows by $T\cap x_iT=\emptyset~(1\leq i\leq n)$ and $x_iT\cap x_{i+2}T=\{x_{i+1},\,x_{i}x_{i+1}x_{i+2}\}~~ (1\leq i\leq n-2)$ that
\begin{eqnarray}\label{abeEq1}
\G_2^+((x_i)_1)\cap\G_2^+(1_1)=\emptyset~(1\leq i\leq n)\mbox{ and } \G_2^+((x_i)_1)\cap\G_2^+((x_{i+2})_1)=\{x_{i+1},\,x_{i}x_{i+1}x_{i+2}\}_0~~(1\leq i\leq n-2).
\end{eqnarray}
This shows that $1_1$ is the only vertex in $\G_2^+(1_0)$ that has no common out-neighbor with any other vertex in $\G_2^+(1_0)$, and $(x_i)_1~(1\leq i\leq n)$ has at least two common out-neighbors with some vertex in $\G_2^+(1_0)$. Hence $\mathcal{A}_{1_0}$ fixes $1_1$ and thus $\mathcal{A}_{1_0}\subseteq \mathcal{A}_{1_1}$. Similarly, the out-neighbors of $1_1$ can be written as
\begin{eqnarray*}
\G_2^+(1_1)=T_0=\{(x_1x_2x_{n-2}x_{n-1})_0,\,(x_1x_2x_{n-1}x_n)_0,(x_jx_{j+1})_0~|~1\leq j\leq n-1\}.
\end{eqnarray*}
We also find $\G_2^+(t_0)=(tS)_1~(t_0\in T_0)$ and the following:
\[\begin{array}{ll}
x_jx_{j+1}S\cap x_{j+1}x_{j+2}S&=\{x_{j+1},\,x_jx_{j+1}x_{j+2}\}~(1\leq j\leq n-2),~\\
x_1x_2x_{n-2}x_{n-1}S\cap x_1x_2S&=\{x_1x_2x_{n-2},\,x_1x_2x_{n-1}\}, ~\\
x_1x_2x_{n-1}x_{n}S\cap x_{n-1}x_nS&=\{x_1x_{n-1}x_n,\,x_2x_{n-1}x_n\}.
\end{array}\]
This shows that each vertex in $\G_2^+(1_1)$ has at least two common out-neighbors with some vertex in $\G_2^+(1_1)$. As $\G_2$ cannot be vertex transitive by (\ref{abeEq1}), $\mathcal{A}$ fixes $G_0$ and $G_1$ setwise. As $R(G)\subseteq \mathcal{A}$, $\mathcal{A}$ has exactly two orbits $G_0$ and $G_1$ and hence $\mathcal{A}=R(G)\mathcal{A}_{1_0}=R(G)\mathcal{A}_{1_1}$. Since $R(G)$ is semiregular and $\mathcal{A}_{1_0}\subseteq \mathcal{A}_{1_1}$, $\mathcal{A}_{1_1}=\mathcal{A}_{1_1}\cap \mathcal{A}=\mathcal{A}_{1_1}\cap R(G)\mathcal{A}_{1_0}=\mathcal{A}_{1_0}$ holds and so $\mathcal{A}_{g_0}=\mathcal{A}_{g_1}$ $(g\in G)$. Let $\sigma\in \mathcal{A}$. Suppose $\{g_0,g_1\}^{\sigma}\cap \{g_0,g_1\}\not=\emptyset$. Then it follows by $\mathcal{A}_{g_0}=\mathcal{A}_{g_1}$ and $\mathcal{A}$ fixes $G_i~(i=0,1)$ that
$$g_0^{\sigma}=g_0 \mbox{~and ~ }g_1^{\sigma}=g_1$$
both hold. This means that $\{\{g_0,g_1\}\ |\ g\in G\}$ is a block system of $\mathcal{A}$ on $V(\G_2)$, and hence there is a permutation $\overline{\sigma}$ of $G$ satisfying $(g^{\overline{\sigma}})_0=(g_0)^{\sigma}$ and $(g^{\overline{\sigma}})_1=(g_1)^{\sigma}$ for each $g\in G$. By Proposition~\ref{prop=Haar}, $\overline{\sigma}\in \Aut(\Cay(G,T))$. Since $\Cay(G,T)$ is a GRR of $G$ by \cite[pp.654]{Imrich1}, we obtain $\overline{\sigma} \in R(G)$. This implies $\sigma \in R(G)$ and hence $\mathcal{A}=R(G)$. Therefore, $\G_2$ is an O$2$SR of $G$. This completes the proof of Case 1. \qed \\

\f{\bf Case 2:} Show that $G$ admits an O$m$SR for any $m\geq 3$.\\
\noindent{\em Proof of Case 2. }Let $m\geq 3$ and take a subset $T_{i,j}\subseteq G~(i,j \in \mz_m)$ as follows:
\begin{eqnarray*}
T_{1,0}=T,~T_{0,1}=T_{i,i+1}=S,~T_{i,i-1}= R,~T_{i,j}=\emptyset \mbox{ for } i\ne 1,~j\ne i\pm1.
\end{eqnarray*}
Let $\G_m:=\Cay(G,T_{i,j}:i,j \in \mz_m)$ and $\mathcal{A}:=\Aut(\G_m)$. Then $\G_m$ is an oriented $m$-Cayley digraph of $G$ with valency $2n+2$ (See {\sc Figure}~\ref{Fig3}). It is easy to see that $\G_m^+(1_1)=S_2\cup T_0$ and $\G_m^+(1_i)=S_{i+1}\cup R_{i-1}$ for each $i\neq 1$. Recall that $\G_m^+(\G_m^+(1_i))$ is the set of out-neighbors of $\G_m^+(1_i)$ (See Notation~\ref{arcdef}). Since $G$ is abelian, we obtain
\begin{eqnarray*}
\G_m^+(\G_m^+(1_0))&=&\G_m^+(S_1\cup R_{m-1})=(S^2)_2\cup(ST)_0\cup (SR)_0\cup(R^2)_{m-2}~;\\
\G_m^+(\G_m^+(1_1))&=&\G_m^+(S_2\cup T_0)=(S^2)_3\cup(SR)_1 \cup(ST)_1\cup(TR)_{m-1}~;\\
 \G_m^+(\G_m^+(1_2))&=&\G_m^+(S_{3}\cup R_{1})=(S^2)_{4}\cup(SR)_2\cup(RT)_{0}~;\\
\G_m^+(\G_m^+(1_i))&=&\G_m^+(S_{i+1}\cup R_{i-1})=(S^2)_{i+2}\cup(SR)_i \cup(R^2)_{i-2}~(3\leq i\leq m-1).
\end{eqnarray*}
\begin{figure}
  \centering
  \includegraphics[width=9cm]{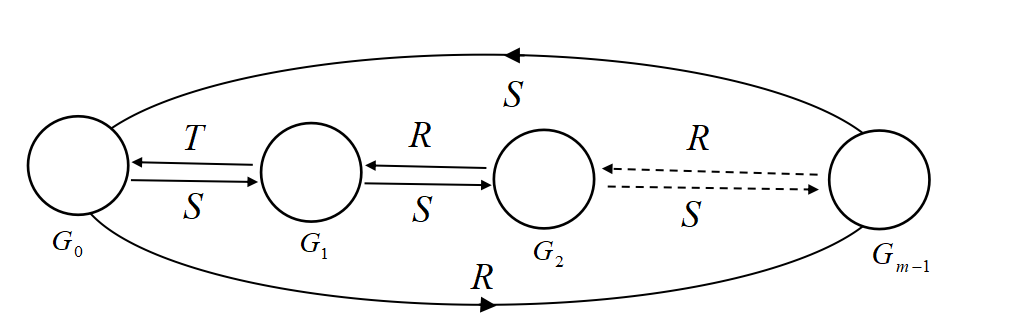}
\caption{The digraph $\G_m$~for $m\geq 3$}\label{Fig3}
\end{figure}
By (\ref{abeEq2}), it is easy to find the following:
\[\begin{array}{ll}
|\G_m^+(\G_m^+(1_0))|> |\G_m^+(\G_m^+(1_i))|, ~~~~&|\G_m^+(\G_m^+(1_1))|>|\G_m^+(\G_m^+(1_i))|~(3\leq i\leq m-1)~;\\
|\G_m^+(\G_m^+(1_1))|>|\G_m^+(\G_m^+(1_2))|,~~~~&|\G_m^+(\G_m^+(1_0))|>|\G_m^+(\G_m^+(1_2))|.
\end{array}
\]
So $\mathcal{A}$ fixes $G_0\cup G_1$ setwise. Since $[G_0\cup G_1]\cong \G_2$ is an O$2$SR of $G$, $\mathcal{A}$ fixes $G_0\cup G_1$ pointwise. Since all out-neighbors of $G_1$ are in $G_0\cup G_2$, $\mathcal{A}$ fixes $G_2$ setwise. Take $g,h\in G$ with $g\ne h$. Then $$\G_m^-(g_2)\cap G_1=(gS)_1=\{g,x_ig~|~1\leq i\leq n\}_1 \mbox{~and ~} \G_m^-(h_2)\cap G_1=(hS)_1=\{h,x_ih~|~1\leq i\leq n\}_1.$$ As $gS\ne hS$, $\mathcal{A}$ fixes $G_2$ pointwise. In the same way, we can show that $\mathcal{A}$ fixes $G_i$ pointwise $(3\leq i\leq m-1)$. Therefore, $\mathcal{A}_{1_1}=1$ and thus $\mathcal{A}=R(G)\mathcal{A}_{1_1}=R(G)$ (i.e., $\G_m$ is an O$m$SR of $G$). This completes the proof for Case 2 of Lemma~ \ref{lem=elementary abelian 2-group}.
\end{proof}

Now we consider finite non-abelian generalized dihedral groups.
\begin{lem}\label{lem=generalizeddihedral}
Let $G$ be a finite non-abelian generalized dihedral group. Then $G$ admits an O$m$SR for any integer $m\geq 2$.
\end{lem}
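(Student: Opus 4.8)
The plan is to mimic the structure of Lemma~\ref{lem=ORR} and Lemma~\ref{lem=elementary abelian 2-group}, namely to handle the case $m=2$ first and then bootstrap to $m\ge 3$. Let $G$ be the generalized dihedral group over an abelian group $H$; since $G$ is non-abelian, by Remark~\ref{rmkgdg}(2) it is not an elementary abelian $2$-group, and since $G$ is a generalized dihedral group of order greater than $2$, Proposition~\ref{prop=ORR} tells us that $G$ does \emph{not} admit an ORR, so Lemma~\ref{lem=ORR} does not apply and a genuinely new construction is needed. The main idea is to exploit the subgroup $H$: $G$ does admit a DRR (by Babai's theorem, as $G\not\cong Q_8,\mz_2^2,\mz_2^3,\mz_2^4,\mz_3^2$), say $\Sigma=\Cay(G,R)$ with $\Aut(\Sigma)=R(G)$, and the obstruction to $\Sigma$ being an ORR is precisely that one cannot avoid digons while keeping the automorphism group small. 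In an $m$-Cayley digraph we have extra layers and extra connection sets $T_{i,j}$ at our disposal, so we can ``break'' the unwanted symmetry by placing asymmetric gadgets between layers while keeping each $T_{i,i}$ (hence each induced layer) oriented.

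First I would construct an oriented $2$-Cayley digraph $\G_2=\BiCay(G,R,L,S,T)$ with $R(G)\le\Aut(\G_2)$ and argue that $\Aut(\G_2)=R(G)$. The natural attempt, following Case~1 of Lemma~\ref{lem=ORR}, is to take $R$ and $L$ to be the two halves of a DRR connection set of $G$ on each layer and then put a small $S=\{1\}$ (and $T$ a single well-chosen element, or $T=\emptyset$) to glue the layers together; one must check $R\cap R^{-1}=L\cap L^{-1}=\emptyset$, $S\cap T^{-1}=\emptyset$, that $\Aut$ cannot swap the two layers (by an arc-counting argument in the induced out-neighbourhoods $[\G_2^+(1_0)]$ versus $[\G_2^+(1_1)]$, exactly as in (\ref{ak}) and Case~1 of Lemma~\ref{lem=ORR}), and that the layer stabiliser $\mathcal A_{1_0}$ is trivial because each layer is a DRR of $G$ and $S=\{1\}$ forces pointwise fixing of the other layer. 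The delicate point is that a DRR connection set $R_0$ of $G$ need not split as $R\,\dot\cup\,R^{-1}$ while still giving a digraph with the right automorphism group on a single layer; so instead one should start from a DRR $\Cay(G,R_0)$, split $R_0=R\cup R'$ arbitrarily into digon-free parts (replacing each digon $\{a,a^{-1}\}\subseteq R_0$ by keeping only one of the two, once on layer $0$ and the mirror one on layer $1$ via $L$), and verify that the resulting asymmetric data still has $\Aut=R(G)$ — this verification, ruling out ``diagonal'' automorphisms mixing the layers, is where the generalized-dihedral structure (the inversion automorphism and the precise form of $H$) will have to be used, possibly splitting into subcases according to the exponent of $H$ or whether $H$ has large $2$-rank, analogously to Lemmas~\ref{lem=elementary abelian 2-group order le 16} and \ref{lem=elementary abelian 2-group}.

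Next, for $m\ge 3$ I would reuse the $m=2$ digraph as a building block in a ``path of layers'' construction, exactly as in Case~2 of Lemma~\ref{lem=ORR}: put the O$2$SR structure on layers $0$ and $1$, attach layers $2,3,\dots,m-1$ in a line with $T_{i,i}=R$ (or a suitable oriented DRR-half) and $T_{i,i-1}=\{1\}$, with one or two asymmetric ``decorations'' near the ends so that arc-counting in the second out-neighbourhoods (or the induced out-neighbourhood digraphs $[\G_m^+(1_i)]$) distinguishes every layer and forces $\mathcal A$ to fix each $G_i$ setwise; then $\mathcal A_{1_0}$ fixes $G_0\cup G_1$ pointwise (being an O$2$SR) and the single generators $T_{i,i-1}=\{1\}$ propagate pointwise fixing down the line, yielding $\mathcal A_{1_0}=1$ and $\mathcal A=R(G)$. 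I expect the main obstacle to be the $m=2$ base case: proving that the two-layer bi-Cayley digraph has no unexpected automorphisms — in particular that no automorphism swaps or shears the layers and that the layer stabiliser is trivial — will require a careful case analysis of the non-abelian generalized dihedral group, and handling small or degenerate $H$ (where standard DRR/ORR constructions fail) may force an appeal to {\sc Magma} for finitely many exceptional orders, just as in the preceding lemmas. Once the $m=2$ case is secured, the extension to $m\ge 3$ is essentially bookkeeping along the template already established in Lemma~\ref{lem=ORR}.
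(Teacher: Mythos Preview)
Your plan has a genuine gap in the $m=2$ base case, and it stems from working with $G$ rather than with its index-$2$ abelian subgroup $H$. You propose to take a DRR connection set $R_0$ of $G$, split it into digon-free halves $R$ and $L$, place these on the two layers, and then argue that ``each layer is a DRR of $G$''. But an oriented connection set $R$ with $\Aut(\Cay(G,R))=R(G)$ is precisely an ORR of $G$, and Proposition~\ref{prop=ORR} says no such thing exists for a non-abelian generalized dihedral group. So the layer digraphs cannot possibly control the automorphism group on their own, and ``verify that the resulting asymmetric data still has $\Aut=R(G)$'' is not a verification step but the entire problem. There is a second structural obstruction you have not addressed: every element of $G\setminus H$ is an involution, so none of them can sit in an oriented $T_{i,i}$; hence $R,L\subseteq H$ necessarily, and each layer $[G_i]\cong\Cay(G,R)$ is disconnected (two disjoint copies of $\Cay(H,R)$ on $H_i$ and $(Hb)_i$). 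Your sketch gives no mechanism to rigidify the $(Hb)$-coset.

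The paper's missing idea is to exploit $H$ rather than $G$: when $H$ admits an ORR, take $R\subseteq H$ with $\Cay(H,R)$ an ORR of $H$, use $R$ and $R^{-1}$ as the layer sets, and put the involution $b$ into a between-layer set (e.g.\ $T_{1,0}=\{a,b\}$) so that it links $H_i$ to $(Hb)_j$. Then $\mathcal A_{1_1}$ fixes $H_1$ pointwise because $[H_1]\cong\Cay(H,R)$ is an ORR of $H$, and the arcs through $b$ propagate pointwise fixing from $H_i$ to $(Hb)_j$. When $H$ does not admit an ORR, Proposition~\ref{prop=ORR} leaves only six explicit abelian groups $H$, and the paper handles these by explicit $R,L\subseteq H$ with the $m=2,3$ cases checked by {\sc Magma}. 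Your $m\ge 3$ bootstrap is fine in outline and matches the paper's, but it rests entirely on the $m=2$ construction, which in your proposal does not yet exist.
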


\begin{proof}
In view of Remark~\ref{rmkgdg}, take an abelian group $H$ of exponent greater than $2$ and an element $b\in G$ satisfying $G=\langle H,b\rangle$, $o(b)=2$ and $bhb=h^{-1}$ for each $h\in H$, where $o(b)$ is the order of $b$ in $G$. Now, we divide the proof into two cases.\\

\f{\bf Case 1:} $H$ admits an ORR.\\
\noindent{\em Proof of Case 1. } Let $R$ be a subset of $H$ such that
$\Sigma:=\Cay(H,R)$ is an ORR of $H$.
If $R=\emptyset$, then $H$ is $\mz_1$ or $\mz_2$ and thus $G$ is $\mz_2$ or $\mz_2^2$. This is impossible as $G$ is non-abelian.
Thus, $R\ne \emptyset $ and take $a\in R$. Since $b\not\in H$, note that
\begin{eqnarray}\label{Eq18}
o(a)\geq 3,\ \ a\in R,\ \ a\not\in R^{-1},\ \ 1\not\in R,\ \ b\not\in R\cup R^{-1}, ~H=\langle R\rangle,~ R\cap R^{-1}=\emptyset
\end{eqnarray}
all hold. Put $k:=|A([\Sigma^+(1)])|$. Now, we divide Case 1 into two subcases: $m=2$ (Subcase 1.1) and $m\geq 3$ (Subcase 1.2).\\

\f{\bf Subcase 1.1: }Show that $G$ admits an O$2$SR. \\
Let $\G_2:=\BiCay(G,R,R^{-1},\{1,a\},\{a,b\})$ with $T_{0,1}=\{1,a\}$ and $T_{1,0}=\{a,b\}$. Let $\mathcal{A}:=\Aut(\G_2)$. Then $\G_2$ is an oriented $2$-Cayley digraph of $G$ with valency $|R|+2$.
 In the same way as Lemma~\ref{lem=ORR}, we first show that $\mathcal{A}$ is not vertex-transitive by counting $|A([\G_2^+(1_0)])|$ and $|A([\G_2^+(1_1)])|$. In the beginning, we find the following:
\[\begin{array}{ll}
\G_2^+(1_0)=\{r_0,1_1,a_1~|~r\in R\},~~~&\G_2^-(1_0)=\{r^{-1}_0,b_1,a^{-1}_1~|~r\in R\}, \\
\G_2^+(1_1)=\{r_1,a_0,b_0~|~r\in R\}, ~~~&\G_2^-(1_1)=\{r^{-1}_1,a^{-1}_0,1_0~|~r\in R\}, \\
\G_2^+(a_1)=\{(ra)_1,a^2_0,(ba)_0~|~r\in R\},~~~&\G_2^-(a_1)=\{(r^{-1}a)_1,1_0,a_0~|~r\in R\},\\
\G_2^+(a_0)=\{(ra)_0,a_1,a^2_1~|~r\in R\},~~~&\G_2^-(a_0)=\{(r^{-1}a)_0,(ba)_1,1_1~|~r\in R\},\\
\G_2^+(b_0)=\{(rb)_0,b_1,(ab)_1~|~r\in R\},~~~& \G_2^-(b_0)=\{(r^{-1}b)_0,1_1,(a^{-1}b)_1~|~r\in R\}.\\
\end{array}
\]

Using the above sets, (\ref{Eq18}) and $bh\in H$ (for $h\in H$), it is easy to find the following:
\[
\begin{array}{lll}
\G_2^+(1_1)\cap \G_2^{+}(1_{0})=\{a_0,a_1\},~& \G_2^-(1_1)\cap \G_2^+(1_0) =\emptyset, ~&\G_2^-(a_1)\cap \G_2^{+}(1_{0})=\{1_1,a_0\},~\\
\G_2^+(a_1)\cap \G_2^+(1_0)=\{a^2_0\}~~ \mbox{ if } a^2\in R,~& \G_2^+(a_1)\cap \G_2^+(1_0) = \emptyset~~ \mbox{ if }a^2\notin R ,~& \\
\G_2^+(b_0)\cap \G_2^{+}(1_{1})=\emptyset,~&\G_2^-(b_0)\cap \G_2^+(1_1)=\emptyset,~&\G_2^-(a_0)\cap \G_2^{+}(1_{1})=\emptyset, \\
\G_2^+(a_0)\cap \G_2^+(1_1)=\{a_1,a^2_1\} ~~\mbox{ if } a^2\in R,~&\G_2^+(a_0)\cap \G_2^+(1_1)=   \{a_1\}~~ \mbox{ if }a^2\notin R.~&\\
\end{array}
\]

As $[\G_2^+(1_0)\cap G_0]\cong[\Sigma^+(1)]$, we obtain
\begin{eqnarray*}
|A([\G_2^+(1_0)])| &= &k-|A(1_1,a_{1})|+|\G_2^+(1_1)\cap \G_2^+(1_0)|+|\G_2^-(1_1)\cap \G_2^+(1_0)|+|\G_2^+(a_1)\cap  \G_2^+(1_0)|+|\G_2^-(a_1)\cap \G_2^+(1_0)|.
\end{eqnarray*}
As $(1_1,a_1)$ is an arc in $\G_2$,
$|A(1_1,a_1)|=1$ and thus
\begin{eqnarray}\label{Eq19}
A|[\G_2^+(1_0)]|=k+4 \mbox{ if } a^2\in R \mbox{~~and~~~}A|[\G_2^+(1_0)]|=k+3 \mbox{ if } a^2\notin R
\end{eqnarray}
hold. As $[\G_2^+(1_1)\cap G_1]\cong[\Sigma^+(1)]$, we obtain
\begin{eqnarray*}
|A([\G_2^+(1_1)])| &= &k-|A(a_0,b_{0})|+|\G_2^+(a_0)\cap \G_2^+(1_1)|+|\G_2^-(a_0)\cap \G_2^+(1_1)|+|\G_2^+(b_0)\cap  \G_2^+(1_1)|+|\G_2^-(b_0)\cap \G_2^+(1_1)|.
\end{eqnarray*}
As $|A(a_0,b_0)|=0$, we can find
\begin{eqnarray}\label{Eq20}
A|[\G_2^+(1_1)]|=k+2 \mbox{ if } a^2\in R \mbox{ and }A|[\G_2^+(1_1)]|= k+1\hspace{1em} \mbox{ if } a^2\notin R .
\end{eqnarray}

By (\ref{Eq19})--(\ref{Eq20}),  $[\G_2^+(1_0)]\ncong[\G_2^+(1_1)]$ holds and thus $\G_2$ is not vertex-transitive. Recall that $R(G)$ is a group of automorphisms of $\G_2$ with two orbits $G_0$ and $G_1$. Then $\mathcal{A}$ also has two orbits $G_0$ and $G_1$ satisfying $\mathcal{A}=R(G)\mathcal{A}_{1_1}$. Note that $G_i=H_i \cup (Hb)_i~(i=0,1)$ and $[H_i]\cong [(Hb)_i]\cong \Sigma$. Since $\mathcal{A}_{1_1}$ fixes $G_1$ setwise and $[H_1]$ is an ORR of $H$, $\mathcal{A}_{1_1}$ fixes $H_1$ pointwise. Since $[\G_2^-(1_1)]=((a^{-1})_0,1_0)$ is an arc, $\mathcal{A}_{1_1}$ fixes $1_0$. Since $[H_0]$ is an ORR, $\mathcal{A}_{1_1}$ fixes $H_0$ pointwise. On the other hand, it follows by $[\G_2^+(1_1)]=(a_0,(ab)_0)$ that $\mathcal{A}_{1_1}$ fixes $(ab)_0$ and thus it fixes $(Hb)_0$ pointwise. By $[\G_2^-(b_0)]=((ab)_1,a_1)$, $\mathcal{A}_{1_1}$ fixes $(Hb)_1$ pointwise. This shows $\mathcal{A}_{1_1}=1$ and thus $\mathcal{A}=R(G)\mathcal{A}_{1_1}=R(G)$. Therefore, $\G_2$ is an O$2$SR of $G$. This completes the proof for Subcase 1.1.\\

\f{\bf Subcase 1.2: }For $m\geq 3$, show that $G$ admits an O$m$SR. \\
For $m\geq 3$, define a subset $T_{i,j}\subseteq G~(i,j \in \mz_m)$ as follows:
\[\begin{array}{ll}
T_{0,0}=T_{1,1}=T_{m-1,m-1}=R,~&T_{1,0}=\{a\}, T_{0,1}=\{1\}, T_{m-2,m-1}=\{b\},T_{m-1,0}=\{a^{-1}\}~; \\
T_{i,i} =R^{-1} &\,\,\,\,\,\,\,\,\,\,\,\,\,\,\,\,\,\,\,\,\,\,\,\,\,\,\,\,\,\,\,\,\mbox{ for } i\neq0,1,m-1; \\
T_{i,i-1}=\{1\} &\,\,\,\,\,\,\,\,\,\,\,\,\,\,\,\,\,\,\,\,\,\,\,\,\,\,\,\,\,\,\,\,\mbox{ for } i\neq1; \\
T_{i,i+1}=\{a\} &\,\,\,\,\,\,\,\,\,\,\,\,\,\,\,\,\,\,\,\,\,\,\,\,\,\,\,\,\,\,\,\,\mbox{ for } i\neq0,m-2,m-1; \\
T_{i,j}=\emptyset &\,\,\,\,\,\,\,\,\,\,\,\,\,\,\,\,\,\,\,\,\,\,\,\,\,\,\,\,\,\,\,\,\mbox{ for } i\neq j,j\pm 1.
\end{array}
\]
Let $\G_m:=\Cay(G,T_{i,j}:i,j \in \mz_m)$. Then $\G_m$ is an oriented $m$-Cayley digraph of $G$ with valency $|R|+2$ (See {\sc Figure}~\ref{Fig5}). To complete the proof for Subcase 1.2, we need the following claim that counts the number of arcs in the induced digraph $[\G_m^+(1_i)]$ for $i\in \mz_m$.
\begin{figure}
  \centering
  \includegraphics[width=9cm]{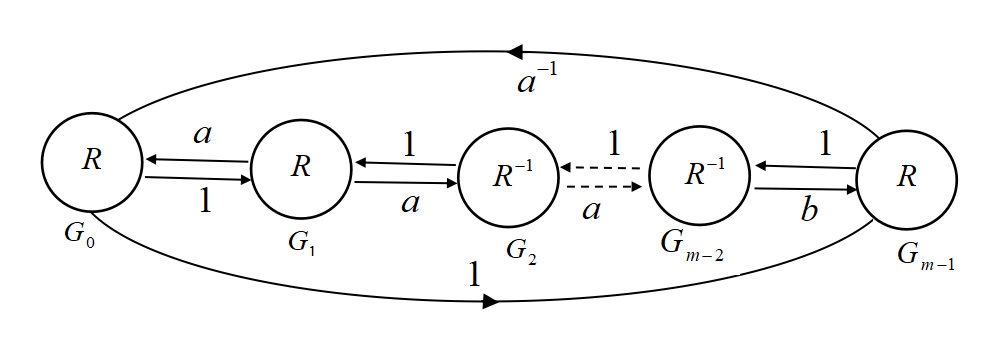}
\caption{The digraph $\G_m$~for~$m\geq 3$}\label{Fig5}
\end{figure}

\smallskip
\f{\bf Claim:}
\smallskip

\f(i) $|A([\G_3^+(1_{0})])|=k+2$, $|A([\G_m^+(1_{0})])|=k+1~(m\geq4).$\\
(ii) $|A([\G_3^+(1_{1})])|=k+1$, $|A([\G_m^+(1_{1})])|=k+2~(m\geq4).$\\
(iii) $|A([\G_m^+(1_{m-1})])|=k~(m\geq 3).$\\
(iv) $|A([\G_m^+(1_{m-2})])|=k~(m\geq 4).$\\
(v) $|A([\G_m^+(1_{i})])|=k~(2\leq i\leq m-3)$.
\medskip

\noindent{\em Proof of Claim}. With the aid of {\sc Figure}~\ref{Fig5}, we find the following:
\[\begin{array}{ll}
\G_m^+(1_0)=\{r_{0},1_1,1_{m-1}~|~r\in R\}, ~~~&\G_m^-(1_0)=\{r^{-1}_{0},a^{-1}_1,a_{m-1}~|~r\in R\},\\
\G_3^+(1_1)=\{r_{1},b_2,a_{0}~|~r\in R\},~~~&\\
\G_m^+(1_1)=\{r_{1},a_2,a_{0}~|~r\in R\}~(m\geq 4),~~~&\G_m^-(1_1)=\{r^{-1}_{1},1_2,1_{0}~|~r\in R\},\\
\G_m^+(1_{m-2})=\{r^{-1}_{m-2},b_{m-1},1_{m-3}~|~r\in R\}~(m\geq 4),~~~&\\
\G_m^+(1_{m-1})=\{r_{m-1},a^{-1}_0,1_{m-2}~|~r\in R\},~~~&\G_m^-(1_{m-1})=\{r^{-1}_{m-1},1_0,b_{m-2}~|~r\in R\},\\
\G_m^+(1_i)=\{r^{-1}_{i},a_{i+1},1_{i-1}~|~r\in R\}~(2\leq i\leq m-3), ~~~&\G_m^-(1_i)=\{r_{i},1_{i+1},a^{-1}_{i-1}~|~r\in R\}~(2\leq i\leq m-2),\\
\G_m^+(a_0)=\{(ra)_0,a_1,a_2~|~r\in R\},~~~&\G_m^-(a_0)=\{(r^{-1}a)_0,1_1,a^2_2~|~r\in R\},\\
\G_m^+(a_{m-2})=\{(r^{-1}a)_{m-2},(ba)_{m-1},a_{m-3}~|~r\in R\}~(m\geq 4),~~~& \\
\G_m^+(a_{i+1})=\{(r^{-1}a)_{i+1},a^2_{i+2},a_{i}~|~r\in R\}~(1\leq i\leq m-4),~~~&\G_m^-(a_{i+1})=\{(ra)_{i+1},a_{i+2},1_{i}~|~r\in R\}~(1\leq i\leq m-3),\\
\G_m^+(a^{-1}_0)=\{(ra^{-1})_0,a^{-1}_1,a^{-1}_{m-1}~|~r\in R\},~~~&\G_m^-(a^{-1}_0)=\{(r^{-1}a^{-1})_0,a^{-2}_1,1_{m-1}~|~r\in R\},\\
\G_m^+(b_{m-1})=\{(rb)_{m-1},(a^{-1}b)_0,b_{m-2}~|~r\in R\},~~~&\G_m^-(b_{m-1})=\{(r^{-1}b)_{m-1},b_{0},1_{m-2}~|~r\in R\}.\\
\end{array}
\]

The above sets enable us to prove the claim as follows.

\noindent{\em Proof of (i). }It is easy to find by the above sets and (\ref{Eq18}) that
\[\begin{array}{llll}
\G_3^+(1_1)\cap \G_3^+(1_0)=\{a_0\}, &\G_3^+(1_{2})\cap \G_3^+(1_0)=\{1_1\},
&\G_3^-(1_1)\cap \G_3^+(1_0)=\{1_2\}, &\G_3^-(1_2)\cap \G_3^+(1_0)=\emptyset~;\\
\G_m^+(1_1)\cap \G_m^+(1_0)=\{a_0\}, &\G_m^+(1_{m-1})\cap \G_m^+(1_0)=\emptyset,
&\G_m^-(1_1)\cap \G_m^+(1_0)=\emptyset, &\G_m^-(1_{m-1})\cap \G_m^+(1_0)=\emptyset~( m\geq 4).
\end{array}\]
As $[\G_m^+(1_0)\cap G_0]\cong[\Sigma^+(1)]$, we obtain
\begin{eqnarray*}
|A([\G_m^+(1_0)])| &= &k-|A(1_1,1_{m-1})|+\\&&|\G_m^+(1_1)\cap \G_m^+(1_0)|+|\G_m^-(1_1)\cap \G_m^+(1_0)|+|\G_m^+(1_{m-1})\cap  \G_m^+(1_0)|+|\G_m^-(1_{m-1})\cap \G_m^+(1_0)|.
\end{eqnarray*}
The result (i) follows by $|A(1_1,1_{2})|=1$  and $|A(1_1,1_{m-1})|=0~(m\geq 4)$  (See {\sc Figure}~\ref{Fig5}).\\

\noindent{\em Proof of (ii). }By (\ref{Eq18}), we have
\[\begin{array}{llll}
\G_3^+(a_0)\cap \G_3^{+}(1_{1})=\{a_1\},~~~&\G_3^+(b_2)\cap \G_3^+(1_1)=\emptyset,~~~&\G_3^-(a_0)\cap \G_3^+(1_1)=\emptyset,~~~&\G_3^-(b_2)\cap \G_3^{+}(1_{1})=\emptyset~;\\
\G_m^+(a_0)\cap \G_m^{+}(1_{1})=\{a_1\},~~~&\G_m^+(a_2)\cap \G_m^+(1_1)=\{a_1\},~~~&
\G_m^-(a_0)\cap \G_m^+(1_1)=\emptyset,~~~&\G_m^-(a_2)\cap \G_m^{+}(1_{1})=\emptyset~(m\geq 4).
\end{array}\]

As $[\G_3^+(1_1)\cap G_1]
\cong [\Sigma^+(1)]$, we have
\begin{eqnarray*}
|A([\G_3^+(1_1)])| &= &k-|A(b_2,a_{0})|+|\G_3^+(b_2)\cap \G_3^+(1_1)|+|\G_3^-(b_2)\cap \G_3^+(1_1)|+|\G_3^+(a_0)\cap  \G_3^+(1_1)|+|\G_3^-(a_0)\cap \G_3^+(1_1)|.
\end{eqnarray*}
This shows $A|[\G_3^+(1_1)]|=k+1$ by  $|A(a_0,b_{2})|=0$ (see {\sc Figure}~\ref{Fig5}).\\
 On the other hand, consider $m\geq4$. As $[\G_m^+(1_1)\cap G_1]
\cong [\Sigma^+(1)]$, we also obtain
\begin{eqnarray*}
|A([\G_m^+(1_1)])| &= &k-|A(a_2,a_{0})|+|\G_m^+(a_2)\cap \G_m^+(1_1)|+|\G_m^-(a_2)\cap \G_m^+(1_1)|+|\G_m^+(a_0)\cap  \G_m^+(1_1)|+|\G_m^-(a_0)\cap \G_m^+(1_1)|.
\end{eqnarray*}
Now Claim~(ii) follows by $|A(a_0,a_{2})|=0~(m\geq 4)$ (see {\sc Figure}~\ref{Fig5}).\\

\noindent{\em Proof of (iii). }The result (iii) follows by $|A(a^{-1}_0,1_{m-2})|=0~(m\geq 3)$ and
\begin{eqnarray*}
\G_m^+(a^{-1}_0)\cap \G_m^{+}(1_{m-1})=\emptyset,~~~\G_m^+(1_{m-2})\cap \G_m^{+}(1_{m-1})=\emptyset,
~~~\G_m^-(a_0)\cap \G_m^+(1_{m-1})=\emptyset,~~~\G_m^-(1_{m-2})\cap \G_m^+(1_{m-1})=\emptyset. \\
\end{eqnarray*}

\noindent{\em Proof of (iv). }The result (iv) follows by $|A(b_{m-1},1_{m-3})|=0~(m\geq 4)$ and
\begin{eqnarray*}
\G_m^+(b_{m-1})\cap \G_m^{+}(1_{m-2})=\emptyset,~~~\G_m^+(1_{m-3})\cap \G_m^{+}(1_{m-1})=\emptyset,~~~
\G_m^-(b_{m-1})\cap \G_m^+(1_{m-2})=\emptyset,~~~\G_m^-(1_{m-3})\cap \G_m^+(1_{m-1})=\emptyset.\\
\end{eqnarray*}

\noindent{\em Proof of (v). }Let $m\geq 5$. For each $2\leq i\leq m-3$, we find
\begin{eqnarray*}
\G_m^+(a_{i+1})\cap \G_m^{+}(1_{i})=\emptyset,~~~\G_m^+(1_{i-1})\cap \G_m^{+}(1_{i})=\emptyset,~~~
\G_m^-(a_{i+1})\cap \G_m^+(1_{i})=\emptyset,~~~\G_m^-(1_{i-1})\cap \G_m^+(1_{i})=\emptyset.
\end{eqnarray*}
The result (v) follows by $|A(a_{i+1},1_{i-1})|=0$ ({\sc Figure}~\ref{Fig5}). This completes the proof of the claim.\qed\\

Now we are ready to complete the proof of Subcase 1.2 in Lemma \ref{lem=generalizeddihedral} by using the above claim. Let $\mathcal{A}:=\Aut(\G_m)$.\\
By $|A([\G_m^+(1_0)])|\ne |A([\G_m^+(1_i)])|~(i\neq 0)$ and $|A([\G_m^+(1_1)])|\ne |A([\G_m^+(1_j)])|~(j\neq 1)$ (see the above claim), we obtain $$[\G_m^+(1_0)]\ncong [\G_m^+(1_i)]~(i\neq 0) \mbox{~and~}[\G_m^+(1_1)]\ncong [\G_m^+(1_j)]~(j\neq 1).$$ Thus $\mathcal{A}$ fixes $G_0$ and $G_1$ setwise. By $|T_{1,2}|=1$ and $T_{1,i}=\emptyset~(3\leq i\leq m-1)$, $\mathcal{A}$ fixes $G_2$ setwise. Similarly, we obtain that $\mathcal{A}$ fixes $G_i$ setwise for each $i\in\mz_m$. Recall that $R(G)$ is a group of automorphisms of $\G_m$ with $m$ orbits  $G_i~(i\in\mz_m)$. Then $\mathcal{A}$ also has $m$ orbits satisfying $\mathcal{A}=R(G)\mathcal{A}_{1_0}$. To prove $\G_m$ is an O$m$SR, we only need to show $\mathcal{A}_{1_0}=1$. Let $\sigma \in \mathcal{A}_{1_0}$. Then $\sigma$ fixes $G_i$ setwise, where $G_i=H_i\cup (Hb)_i~(i\in \mz_m)$. Since $[H_0]\cong \Sigma$ is an ORR of $H$, $\sigma$ fixes $H_0$ pointwise. By $T_{0,1}=\{1\}$, $\mathcal{A}_{1_0}$ fixes $H_{1}$ pointwise. By $T_{j,j+1}=\{a\}~(1\leq j\leq m-3)$, $\mathcal{A}_{1_0}$ fixes $H_i~(2\leq i\leq m-2)$ pointwise. As $T_{m-2,m-1}=\{b\}$, $\mathcal{A}_{1_0}$ fixes $H_{{m-1}}$ pointwise. On the other hand, it follows by $T_{m-2,m-1}=\{b\}$ that $\mathcal{A}_{1_0}$ fixes $(Hb)_{m-1}$ pointwise. By $T_{m-1,0}=\{a^{-1}\}$, $\mathcal{A}_{1_0}$ fixes $(Hb)_{0}$ pointwise. We can also obtain by $T_{j,j+1}=\{a\}~(1\leq j\leq m-3)$ that $\mathcal{A}_{1_0}$ fixes $(Hb)_i~(1\leq i\leq m-2)$ pointwise. As $G=H\cup Hb$, we conclude that $\mathcal{A}_{1_0}$ fixes $G_i~(i\in \mz_m)$ pointwise (i.e., $\mathcal{A}_{1_0}=1$). Therefore Subcase 1.2 follows. This completes the proof of Case 1.\qed \\

\f{\bf Case 2:} $H$ does not admit ORRs.\\
\noindent{\em Proof of Case 2. }Recall that $H$ is an abelian group and $G$ is a non-abelian generalized dihedral group over $H$. By Remark~\ref{rmkgdg} and Proposition~\ref{prop=ORR}, $H=\mz_4 \times \mz_2$, $\mz_3^2$,  $\mz_4 \times \mz_2^2$, $\mz_3 \times \mz_2^3$, $\mz_4 \times \mz_2^3$ or $\mz_4 \times \mz_2^4$. For each group $H$, take two subsets $R,L \subseteq H$ as follows:
\begin{enumerate}
\item $R=\{x,xb\}$ and $L=\{x,x^{-1}y\}$ for $H=\mz_4\times \mz_2=\langle x,y|~x^4=y^2=1,xy=yx\rangle$~;
\item $R=\{x,xb\}$ and $L=\{x,x^{-1}y\}$ for $H=\mz_3^2=\langle x,y|~x^3=y^3=1,xy=yx\rangle$ ~;
\item $R=\{x,xy\}$ and $L=\{x,x^{-1}yz\}$ for $H=\mz_4\times \mz_2^2=\langle x,y,z~|~x^4=y^2=z^2=1,xy=yx,xz=zx,yz=zy\rangle$~;
\item $R=\{x,xy,xz,xw\}$ and $L=\{x,xz,xzw,x^{-1}yzw\}$ \\
for $H=\mz_3\times \mz_2^3=\langle x,y,z,w~|~x^3=y^2=z^2=w^2=1,xy=yx,xz=zx,yz=zy,xw=wx,yw=wy,zw=wz\rangle$~;
\item $R=\{x,xy,xz,xw\}$ and $L=\{x,xz,xzw,x^{-1}yzw\}$ \\
for $H=\mz_4\times \mz_2^3=\langle x,y,z,w~|~x^4=y^2=z^2=w^2=1,xy=yx,xz=zx,yz=zy,xw=wx,yw=wy,zw=wz\rangle$~;
\item $R=\{x,xy,xz,xw,xu\}$ and $L=\{x,xz,x^{-1}zw,xyzw,x^{-1}zwu\}$ \\
for $H=\mz_4\times \mz_2^4=\langle x,y,z,w,u~|~x^4=y^2=z^2=w^2=u^2=1,xy=yx,xz=zx,yz=zy,xw=wx,yw=wy,zw=wz,xu=ux,yu=uy,zu=uz,wu=uw\rangle$.
\end{enumerate}
Note that for each case, $R$ and $L$ satisfy
$$|R|=|L| \mbox{~and~}  R\cap R^{-1}=L\cap L^{-1}=\emptyset.$$
For each case in (1)-(6) and for each $m\geq 2$, we define a subset $T_{i,j}\subseteq G~(i,j\in \mz_m)$ as follows:
\[
\begin{array}{lll}
T_{0,0}=R,~~~&T_{1,0}=\{x^{-1}\},~~~&T_{0,1}=\{b\}~;\\
T_{i,i}=L ,~~~&T_{i,i+1}=\{1\}~~~&\mbox{for } i\neq 0~; \\
&T_{i,i-1}=\{x\}~~~&\mbox{for } i\neq1~; \\
&T_{i,j}=\emptyset ~~~& \mbox{for } i\neq j,j\pm1.
\end{array}
\]
Let $\G_m:=\Cay(G,T_{i,j}:i,j \in \mz_m)$ and $\mathcal{A}:=\Aut(\G_m)$. Then $\G_m$ is an oriented $m$-Cayley digraph of $G$, where the valency of $\G_m$ is $|R|+1$ for $m=2$ and $|R|+2$ for $m\geq 3$ (See {\sc Figure}~\ref{Fig4}).
\begin{figure}
 \centering
  \includegraphics[width=9cm]{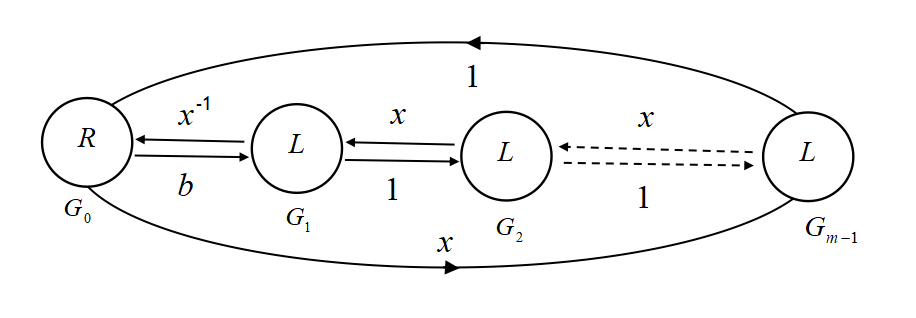}
\caption{The digraph $\G_m$~for $m\geq 2$}\label{Fig4}
\end{figure}
With the aid of {\sc Magma} \cite{magma}, we can find that $\G_m$ is an O$m$SR of $G$ for $m=2,3$. In the rest of the proof, we will show that $\G_m$ is an O$m$SR of $G$ for all $m\geq 4$. Let $\Sigma:=\Cay(G,R)$ and $\Phi:=\Cay(G,L)$. It is easy to see that $|A([\Sigma^+(1)])|=|A([\Phi^+(1)])|=0$ holds for each case.
By the definition of $\G_m$ and $R(g)\in \mathcal{A}$ for $g\in G$, we find the following:
\[
\begin{array}{ll}
\G_m^+(1_0)=\{r_0,b_1,x_{m-1}~|~r\in R\},~~~& \G_m^-(1_0)=\{r^{-1}_0,x_1,1_{m-1}~|~r\in R\},\\
\G_m^+(1_1)=\{l_1,1_2,x^{-1}_0~|~l\in L\},~~~&\G_m^-(1_1)=\{l^{-1}_1,x^{-1}_2,b_0~|~l\in L\},\\
\G_m^+(1_i)=\{l_i,1_{i+1},x_{i-1}~|~l\in L\}~(2\leq i\leq m-1),~~~&\G_m^-(1_i)=\{l^{-1}_i,x^{-1}_{i+1},1_{i-1}~|~l\in L\}~(2\leq i\leq m-1),\\
\G_m^+(b_1)=\{(lb)_1,b_2,(x^{-1}b)_0~|~l\in L\},~~~&\G_m^-(b_1)=
\{(l^{-1}b)_1,(x^{-1}b)_2,1_0~|~l\in L\},\\
\G_m^+(x_{1})=\{(lx)_{1},x_2,1_{0}~|~l\in L\},~~~&  \G_m^-(x_{1})=\{(l^{-1}x)_{1},1_2,(bx)_0~|~l\in L\},\\
\G_m^+(x_{i-1})=\{(lx)_{i-1},x_{i},x^2_{i-2}~|~l\in L\}~(3\leq i\leq m-1),~~~& \G_m^-(x_{i-1})=\{(l^{-1}x)_{i-1},1_{i},x_{i-2}~|~l\in L\}~(3\leq i\leq m-1),\\
\G_m^+(x_{m-1})=\{(lx)_{m-1},x_0,x^2_{m-2}~|~l\in L\},~~~&\G_m^-(x_{m-1})=\{(l^{-1}x)_{m-1},1_0,x_{m-2}~|~l\in L\},\\
\G_m^+(x^{-1}_0)=\{(rx^{-1})_{0},(bx^{-1})_1,1_{m-1}~|~r\in R\},~~~& \G_m^-(x^{-1}_0)=\{(r^{-1}x^{-1})_{0},1_1,x^{-1}_{m-1}~|~r\in R\}.\\
\end{array}
\]
Using the above sets and  $x\in R\cap L$, $b\notin R\cup L$, $1\notin R$ and $1\notin L$, it is easy to find the following:
\[
\begin{array}{ll}
\G_m^+(b_1)\cap \G_m^+(1_0)=\emptyset, ~~~& \G_m^-(b_1)\cap \G_m^+(1_0)=\emptyset, \\
\G_m^+(x_{m-1})\cap \G_m^{+}(1_0)=\{x_0\},~~~&\G_m^-(x_{m-1})\cap \G_m^{+}(1_0)=\emptyset,\\
\G_m^+(1_2)\cap \G_m^+(1_1)=\{x_1\}, ~~~& \G_m^-(1_2)\cap \G_m^+(1_1)=\emptyset, \\
\G_m^+(x^{-1}_{0})\cap \G_m^{+}(1_1)=\emptyset, ~~~&\G_m^-(x^{-1}_{0})\cap \G_m^{+}(1_1)=\emptyset,\\
\G_m^+(1_{3})\cap \G_m^+(1_{2})=\{x_2\}, ~~~& \G_m^-(1_{3})\cap \G_m^+(1_{2})=\emptyset,\\
\G_m^+(x_{1})\cap \G_m^+(1_{2})=\{x_2\}, ~~~&\G_m^-(x_{1})\cap \G_m^+(1_{2})=\emptyset,\\
\G_m^+(1_{i+1})\cap \G_m^+(1_{i})=\{x_i\}~~(3\leq i\leq m-1),~~~& \G_m^-(1_{i+1})\cap \G_m^+(1_{i})=\emptyset~~(3\leq i\leq m-1), \\
\G_m^+(x_{i-1})\cap \G_m^+(1_{i})=\{x_i\}~~(3\leq i\leq m-1),~~~&\G_m^-(x_{i-1})\cap \G_m^+(1_{i})=\emptyset~~(3\leq i\leq m-1).
\end{array}
\]
In the same way as Lemma~\ref{lem=ORR}, we first show
\begin{equation}\label{Eq151617}
|A([\G_m^+(1_i)])|=\left\{
\begin{array}{ll}
1 & \mbox{~~~for~~} i=0,1\\
2 & \mbox{~~~for~~} i\ne 0,1
\end{array}
\right..
\end{equation}
Let $i=0$. As $|A(R_0)|=|[\G_m^+(1_0)\cap G_0]|=|A([\Sigma^+(1)])|=0$ and $|A(b_1,x_{m-1})|=0$, we find (\ref{Eq151617}) for $i=0$ by
\[|A([\G_m^+(1_0)])|= 0-|A(b_1,x_{m-1})|+
 |\G_m^+(b_1)\cap \G_m^+(1_0)|+|\G_m^-(b_1)\cap \G_m^+(1_0)|+|\G_m^+(x_{m-1})\cap  \G_m^+(1_0)|+|\G_m^-(x_{m-1})\cap \G_m^+(1_0)|.
\]
Let $i=1$. As $|A(L_1)|=|[\G_m^+(1_1)\cap G_1]|=|A( [\Phi^+(1)])|=0$ and $|A(1_2,x^{-1}_0)|=0$, we find (\ref{Eq151617}) for $i=1$ by
\[|A([\G_m^+(1_1)])|= 0-|A(1_{2},x^{-1}_{0})|+
 |\G_m^+(1_{2})\cap \G_m^+(1_1)|+|\G_m^-(1_{2})\cap \G_m^+(1_1)|+|\G_m^+(x^{-1}_{0})\cap  \G_m^+(1_1)|+|\G_m^-(x^{-1}_{0})\cap \G_m^+(1_1)|.
\]
Finally, we consider $i\neq 0,1$. Since there are no arcs between $1_{i+1}$ and $x_{i-1}$, we have $|A(1_{i+1},x_{i-1})|=0~(i\ne 0,1)$. Now (\ref{Eq151617}) for $i\ne 0,1$ follows immediately.\\
Since $[\G_m^+(1_0)]\ncong [\G_m^+(1_i)]$ and $[\G_m^+(1_1)]\ncong [\G_m^+(1_i)]~(i\ne 0,1)$ all hold by (\ref{Eq151617}), $\mathcal{A}$ fixes $G_0\cup G_1$ setwise.
Since $[G_0\cup G_1]\cong\G_2$ is an O$2$SR, $\mathcal{A}$ fixes $G_i~(i=0,1)$ setwise. By $T_{1,2}=\{1\}$ and $T_{1,i}=\emptyset~(i\neq 0,1,2)$, $\mathcal{A}$ fixes $G_2$ setwise. Similarly, $\mathcal{A}$ fixes $G_i$ setwise for each $i\in \mz_m$. Since $[G_0\cup G_1]\cong\G_2$ is an O$2$SR, $\mathcal{A}_{1_0}$ fixes $G_i~(i\in \mz_m)$ pointwise and so $\mathcal{A}_{1_0}=1$. Hence $\mathcal{A}=R(G)\mathcal{A}_{1_0}=R(G)$ and so $\G_m$ is an O$m$SR of $G$. This completes the proof for Case 2 of Lemma \ref{lem=generalizeddihedral}.
\end{proof}

Now, we are ready to prove Theorem~\ref{theo=main}.

\begin{proof}[\bf Proof of Theorem~\ref{theo=main}]
Let $G$ be a finite group and $m\geq 1$ be an integer. Without loss of generality, we may assume that $G$ does not satisfy Theorem~\ref{theo=main}(1), that is, $G$ does not admit O$m$SRs. If $m=1$ then Theorem~\ref{theo=main}(2) follows by Proposition~\ref{prop=ORR}. Now assume $m\geq 2$ and $G$ does not admit O$m$SRs. To complete the proof, we consider two cases, $G$ admits an ORR or not.\\

\indent If $G$ admits an ORR, then $G$ is $\mz_1$ or $\mz_2$ by Lemma~\ref{lem=ORR}. Hence, we obtain that $G$ is either $\mz_1$ with $3\leq m\leq 6$ (i.e., Theorem~\ref{theo=main}(4)) or $\mz_1, \mz_2$ with $m=2$ by Lemma \ref{lem=elementary abelian 2-group order le 16}.\\

 \indent If $G$ does not admit ORRs, then it follows by Proposition~\ref{prop=ORR} that $G$ is either a generalized dihedral group of order greater than $2$ or one of the $11$ exceptional groups given in {\sc Table}~\ref{table1}. We first consider that $G$ is a generalized dihedral group of order greater than $2$. Then $G$ is an elementary abelian $2$-group by Remark~\ref{rmkgdg} and Lemma \ref{lem=generalizeddihedral}. Moreover, $G$ has order at most $2^4=16$ by Lemma \ref{lem=elementary abelian 2-group}. Now, Theorem~\ref{theo=main}(3) follows by Lemma~\ref{lem=elementary abelian 2-group order le 16}. To complete the proof, we now show that none of the $11$ exceptional groups in {\sc Table}~\ref{table1} does not admit O$m$SRs for any $m\geq 2$. For each group $G$  in {\sc Table}~\ref{table1}, take two subsets $R,L\subseteq G$ as follows:
\begin{enumerate}
\item $R=\{x,xy\}$ and $L=\{x,x^{-1}y\}$ for $G=\mz_4\times \mz_2=\langle x,y~|~x^4=y^2=1,xy=yx\rangle$~;
\item $R=\{x,xy\}$ and $L=\{x,x^{-1}y\}$ for $G=Q_8=\langle x,y~|~x^4=y^4=1,x^2=y^2,x^y=x^{-1}\rangle$~;
\item $R=\{x,xy\}$ and $L=\{x,x^{-1}y\}$ for $G=\mz_3^2=\langle x,y~|~x^3=y^3=1,xy=yx\rangle$~;
\item $R=\{x,xy,xz\}$ and $L=\{x,x^{-1}y,x^{-1}yz\}$ \\
for $G=\mz_4\times \mz_2^2=\langle x,y,z~|~x^4=y^2=z^2=1,xy=yx,xz=zx$, $yz=zy\rangle$~;
\item $R=\{x,xy,xz,xw\}$ and $L=\{x,x^{-1}y,x^{-1}yz,x^{-1}yzw\}$ \\
for  $G=\mz_3\times \mz_2^3=\langle x,y,z,w~|~x^3=y^2=z^2=$ $w^2=1,xy=yx,xz=zx,yz=zy,xw=wx,yw=wy,zw=wz\rangle$~;
\item $R=\{x,xy,xz,xw\}$ and $L=\{x,x^{-1}y,x^{-1}yz,x^{-1}yzw\}$ \\
for $G=\mz_4\times \mz_2^3=\langle x,y,z,w~|~x^4=y^2=z^2=$ $w^2=1,xy=yx,xz=zx,yz=zy,xw=wx,yw=wy,zw=wz\rangle$~;
\item $R=\{x,xy,xz,xw,xu\}$ and $L=\{x,x^{-1}y,x^{-1}yz,x^{-1}yzw,x^{-1}yzwu\}$ \\
for $G=\mz_4\times \mz_2^4=\langle x,y,z,w,u~|~x^4=y^2=z^2=w^2=$ $u^2=1,xy=yx,xz=zx,yz=zy,xw=wx,yw=wy,zw=wz,xu=ux,yu=uy,zu=uz,wu=uw\rangle$~;
\item $R=\{x,xy\}$ and $L=\{x,x^{-1}y\}$ for $G=H_1$~;
\item $R=\{x,xy,xz\}$ and $L=\{x,x^{-1}y,x^{-1}yz\}$ for  $G=H_2$~;
\item $R=\{x,xy,xz\}$ and $L=\{x,x^{-1}y,x^{-1}yz\}$ for $G=H_3$~;
\item  $R=\{x,xy,xz,xw\}$ and $L=\{x,x^{-1}y,x^{-1}yz,x^{-1}yzw\}$ \\
for $G=D_4\circ D_4=\langle x,y~|~x^4=y^2=1,x^y=x^{-1}\rangle\circ\langle z,w~|~z^4=w^2,z^w=z^{-1}\rangle$.
\end{enumerate}
Note that for each case, $R$ and $L$ satisfy
\begin{equation*}
|R|=|L| \mbox{~and~} R\cap R^{-1}=L\cap L^{-1}=\emptyset.
\end{equation*}
For a given $G$ in (1)-(11) and for each $m\geq 2$, we define a subset $T_{i,j}\subseteq G~(i,j\in \mz_m)$ as follows:
\[
\begin{array}{ll}
T_{0,0}=R,\,\,\,\,\,\,\,\,\,\,\,\,T_{1,0}=\{x^{-1}\}~; &\nonumber \\
T_{i,i}=L,\,\,\,\,\,\,\,\,\,\,\,\,\,T_{i,i-1}=\{x\}  &\mbox{ for~} i\neq1~; \nonumber\\
T_{i,i+1}= \{1\} &\mbox{ for~} i\in \mz_m~; \nonumber \\
T_{i,j}=\emptyset  &\mbox{ for~} i\ne j,j\pm1. \nonumber
\end{array}
\]
Let $\G_m:=\Cay(G,T_{i,j}:i,j \in \mz_m)$ and $\mathcal{A}:=\Aut(\G_m)$. Then $\G_m$ is an oriented $m$-Cayley digraph of $G$, where the valency of $\G_m$ is
$|R|+1$ for $m=2$ and $|R|+2$ for $m\geq 3$ (See {\sc Figure}~\ref{Fig6}).

\begin{figure}
  \centering
  \includegraphics[width=9cm]{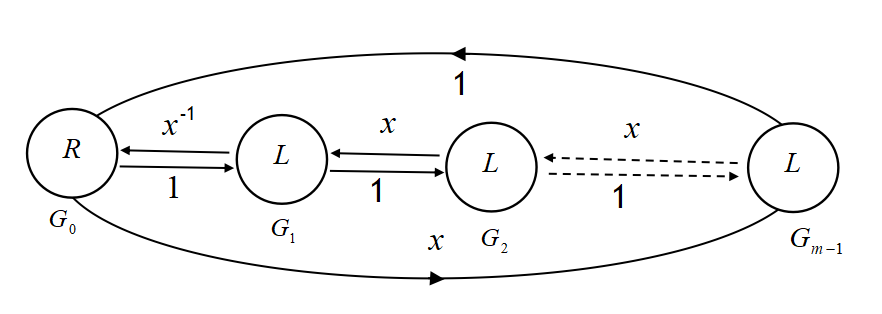}
\caption{The digraph $\G_m$~for $m\geq 2$}\label{Fig6}
\end{figure}
With the aid of {\sc Magma} \cite{magma}, we can find that $\G_m$ is an O$m$SR of $G$ for $m=2,3$. In the rest of the proof, we will show that $\G_m$ is an O$m$SR of $G$ for all $m\geq 4$. Let $\Sigma:=\Cay(G,R)$ and $\Phi:=\Cay(G,L)$. Both $\Sigma$ and $\Phi$ are oriented Cayley digraphs of $G$ satisfying $|A([\Sigma^+(1)])|=|A([\Phi^+(1)])|=0$. By the definition of $\G_m$ and $R(g)\in \mathcal{A}$ for $g\in G$, we find the following:
\[\begin{array}{ll}
\G_m^+(1_0)=\{r_0,1_1,x_{m-1}~|~r\in R\}, &~~\G_m^-(1_0) =\{r^{-1}_0,x_1,1_{m-1}~|~r\in R\},\\
\G_m^+(1_1)=\{l_1,1_2,x^{-1}_{0}~|~l\in L\}, &~~\\
\G_m^+(1_i)=\{l_i,1_{i+1},x_{i-1}~|~l\in L\}~~ (2\leq i\leq m-1), &~~\G_m^-(1_i)=\{l^{-1}_i,x^{-1}_{i+1},1_{i-1}~|~l\in L\}~~ (1\leq i\leq m-1), \\
\G_m^+(x_1)=\{(lx)_1,x_2,1_0~|~l\in L\}, &~~\G_m^-(x_1)=\{(l^{-1}x)_1,1_2,x_0~|~l\in L\},\\
\G_m^+(x_{i-1})=\{(lx)_{i-1},x_{i},x^2_{i-2}~|~l\in L\}~~(3\leq i\leq m-1), &~~\G_m^-(x_{i-1})=\{(l^{-1}x)_{i-1},1_{i},x_{i-2}~|~l\in L\}~~(3\leq i\leq m-1),\\
\G_m^+(x^{-1}_{0})=\{(rx^{-1})_{0},x^{-1}_1,1_{m-1}~|~r\in R\}, &~~\G_m^-(x^{-1}_{0})=\{(r^{-1}x^{-1})_{0},1_1,x^{-1}_{m-1}~|~r\in R\}.
\end{array}\]
Using $x\in R\cap L$, $x^{-1}\notin R\cup L$, $1\notin R\cup L\cup R^{-1}\cup L^{-1}$ and the above sets, it is easy to find the following:
\[\begin{array}{ll}
\G_m^+(1_1)\cap \G_m^+(1_0)=\emptyset,&~~\G_m^-(1_1)\cap \G_m^+(1_0)=\emptyset,\\
\G_m^+(x_{m-1})\cap \G_m^+(1_0)=\{x_0\},&~~\G_m^-(x_{m-1})\cap \G_m^+(1_0)=\emptyset,\\
\G_m^+(x^{-1}_{0})\cap \G_m^+(1_1)=\emptyset,&~~\G_m^-(x^{-1}_{0})\cap \G_m^+(1_1)=\emptyset, \\
\G_m^+(1_{i+1})\cap \G_m^+(1_{i})=\{x_i\}~~(i \ne 0),&~~\G_m^-(1_{i+1})\cap \G_m^+(1_{i})=\emptyset ~~(i \ne 0),\\
\G_m^+(x_{i-1})\cap \G_m^+(1_{i})=\{x_i\}~~( i\not=0,1),&~~\G_m^-(x_{i-1})\cap \G_m^+(1_{i})=\emptyset  ~~( i\ne 0,1).
\end{array}\]

Note that $|A(1_1,x_{m-1})|=|A(1_2,x^{-1}_{0})|=|A(1_{i+1},x_{i-1})|=0$ holds for $2\leq i\leq m-1$. In the same way of claims in Lemma~\ref{lem=ORR} and Lemma~\ref{lem=generalizeddihedral}, we can obtain
$$
|A([\G_m^+(1_{0})])|=1,~~ |A([\G_m^+(1_{1})])|=1,~~|A([\G_m^+(1_{i})])|=2~~(i\ne 0,1).
$$
Since $\mathcal{A}$ fixes $G_0\cup G_1$ setwise and $[G_0\cup G_1]\cong\G_2$ is an O$2$SR, $\mathcal{A}$ fixes $G_i~(i=0,1)$ setwise.
By $T_{1,2}=\{1\}$ and $T_{1,i}=\emptyset$ $(i\neq 0,1,2)$, $\mathcal{A}$ fixes $G_2$ setwise. Similarly, $\mathcal{A}$ fixes $G_i$ setwise for each $i\in \mz_m$. Since $\G_2$ is an O$2$SR, $\mathcal{A}_{1_0}$ fixes $G_i~(i\in \mz_m)$ pointwise and so $\mathcal{A}_{1_0}=1$. Thus, $\mathcal{A}=R(G)\mathcal{A}_{1_0}=R(G)$ and hence $\G_m$ is an O$m$SR of $G$. Therefore, none of the $11$ exceptional groups in {\sc Table}~\ref{table1} does not admit O$m$SRs for any $m\geq 2$. This completes the proof of Themrem~\ref{theo=main}.
\end{proof}

\f {\bf Acknowledgement:} The first author was supported by the National Natural Science Foundation of China (12101601, 12161141005), the Natural Science Foundation of Jiangsu Province, China (BK20200627) and by the China Postdoctoral Science Foundation (2021M693423). The second author was supported by the National Natural Science Foundation of China (12161141005). The third author was supported by Basic Science Research Program through the National Research Foundation of Korea(NRF) funded by the Ministry of Education (NRF-2017R1D1A1B06029987).

%This work was supported by the National Natural Science Foundation of China (12101601,12161141
%005), the Natural Science Foundation of Jiangsu Province, China (BK20200627) and by the China Postdoctoral Science Foundation (2021M693423). This research was supported by Basic Science Research Program through the National Research Foundation of Korea(NRF) funded by the Ministry of Education (NRF-2017R1D1A1B06029987).

\end{document}